\newtheorem{theorem}{Theorem}[section]
\newtheorem{proposition}[theorem]{Proposition}
\newtheorem{lemma}[theorem]{Lemma}
\theoremstyle{definition}
\newtheorem{definition}[theorem]{Definition}
\theoremstyle{remark}
\newtheorem{remark}[theorem]{Remark}
\newtheorem{example}[theorem]{Example}
\numberwithin{equation}{section}
\title{A Hilbert space approach to\\ fractional difference equations}
\author[1]{Pham The Anh}
\author[2]{Artur Babiarz}
\author[2]{Adam Czornik}
\author[4]{Konrad Kitzing}
\author[2,3]{Micha\l{} Niezabitowski}
\author[4]{Stefan Siegmund}
\author[4]{Sascha Trostorff}
\author[5]{Hoang The Tuan}
\affil[1]{Department of Mathematics, Le Quy Don Technical University, 236 Hoang Quoc Viet, Ha noi, Vietnam} 
\affil[2]{Silesian University of Technology, Faculty of Automatic Control, Electronics and Computer Science, Akademicka 16, 44-100 Gliwice, Poland}
\affil[3]{University of Silesia, Faculty of Mathematics, Physics and Chemistry, Institute of Mathematics, Bankowa 14, 40-007 Katowice, Poland} 
\affil[4]{Technische Universit\"{a}t Dresden, Faculty of Mathematics, Zellescher Weg 12-14, 01069 Dresden, Germany}
\affil[5]{Institute of Mathematics, Vietnam Academy of Science and Technology, 18 Hoang Quoc Viet Road, Cau Giay, Hanoi, Vietnam} 
\renewcommand{\phi}{\varphi}
\renewcommand{\rho}{\varrho}
\newcommand{\eps}{\varepsilon}
\newcommand{\R}{\mathbb{R}}
\newcommand{\C}{\mathbb{C}}
\newcommand{\N}{\mathbb{N}}
\newcommand{\Z}{\mathbb{Z}}
\newcommand{\cZ}{\mathcal{Z}}
\newcommand{\mulop}{\operatorname{m}}
\newcommand{\spt}{\operatorname{spt}}
\newcommand{\ran}{\operatorname{ran}}
\newcommand{\norm}[1]{\left\lVert#1\right\rVert}
\newcommand{\abs}[1]{\left\lvert#1\right\rvert}
\newcommand{\setm}[2]{\{\hspace*{0,07em}#1;#2\hspace*{0,07em}\}}
\begin{document}

\maketitle              

\begin{abstract}
We formulate fractional difference equations of Riemann-Liouville and Caputo type in a functional analytical framework. Main results are existence of solutions on Hilbert space-valued weighted sequence spaces and a condition for stability of linear fractional difference equations. Using a functional calculus, we relate the fractional sum to fractional powers of the operator $1 - \tau^{-1}$ with the right shift $\tau^{-1}$ on weighted sequence spaces. Causality of the solution operator plays a crucial role for the description of initial value problems.
\end{abstract}

\section{Introduction}

\subsection{Notation}

We write $\R_{> 0} \coloneqq \setm{x \in \R}{x > 0}$ and for $\mu, \rho \in \R$ we define for the comprehension $\C_{\abs{\cdot} < \mu} \coloneqq \setm{z \in \C}{\abs{z} < \mu}$
    and $\C_{\abs{\cdot} > \mu}$, $\C_{\abs{\cdot} \leq \mu}$, $\C_{\abs{\cdot} \geq \mu}$ and $\C_{\mu \geq \abs{\cdot} \geq \rho}$ are defined similarly.
For $\rho > 0$ we denote the complex ball with radius $\rho$ centered at $0$ by $B(0, \rho) \coloneqq \setm{z \in \C}{|z| < \rho}$ and the circle with radius $\rho$ centered at $0$ by $S_\rho \coloneqq \partial B(0, \rho)$.
We set $\N \coloneqq \Z_{\geq 0} \coloneqq \{0, 1, 2, \dots\}$.
For sets $X, Y$ we denote the set of functions from $Y$ to $X$ by $X^Y \coloneqq \{f: Y \rightarrow X\}$ and for $f \in X^Y$ we write $\ran f \coloneqq \setm{f(y) \in X}{y \in Y}$ for the range of $f$.
In particular, for any $M \subseteq \Z$, $X^M$ is the space of sequences in $X$ on $M$ and for $u \in X^M$, $n \in M$ we write $u_n \coloneqq u(n)$.
The identity mapping on a vector space $V$ is denoted by $1$.
For a sequence $u \in V^\Z$ we denote $\spt u \coloneqq \setm{n \in \Z}{u_n \neq 0}$.
If $V$ is a normed vector space we denote with $\norm{\cdot}_V$ the norm on $V$.

We recall the binomial coefficient and the binomial series including some of their properties.
Proofs of the following propositions can be found in \cite{knuth1994} and \cite{koenigsberger2004}.
\begin{proposition}[Binomial coefficient ({\cite[pp.~164-165]{knuth1994}}, {\cite[p.~34]{koenigsberger2004}})]\label{p:binom}
For $\alpha \in \C$ and $n \in \Z_{\geq 1}$ the binomial coefficent is defined by
\begin{equation*}
    \binom{\alpha}{0} \coloneqq 1, \qquad \binom{\alpha}{n} \coloneqq \frac{\alpha(\alpha - 1) \cdots (\alpha - n + 1)}{n!}.
\end{equation*}
For $\alpha \in \C$ and $n \in \N$ we have
\begin{equation*}
    (-1)^n \binom{\alpha}{n} = \binom{-\alpha + n - 1}{n} \qquad \text{and} \qquad \sum_{k = 0}^n (-1)^k \binom{\alpha}{k} = (-1)^n \binom{\alpha - 1}{n}.
\end{equation*}
\end{proposition}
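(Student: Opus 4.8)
The plan is to obtain both identities by elementary manipulation of the defining product, the only auxiliary ingredient being the Pascal-type recursion
\[
\binom{\alpha}{n} = \binom{\alpha - 1}{n} + \binom{\alpha - 1}{n - 1} \qquad (\alpha \in \C,\ n \in \Z_{\geq 1}),
\]
which itself follows from the definition by writing $\alpha(\alpha-1)\cdots(\alpha-n+1) = \bigl((\alpha-n) + n\bigr)(\alpha-1)\cdots(\alpha-n+1)$ and dividing by $n!$.

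For the first identity I would distribute the $n$ factors $(-1)$ onto the $n$ factors in the numerator of $\binom{\alpha}{n}$ to get
\[
(-1)^n \binom{\alpha}{n} = \frac{(-\alpha)(-\alpha + 1) \cdots (-\alpha + n - 1)}{n!};
\]
reversing the order of the factors in this numerator, i.e. substituting $k \mapsto n - 1 - k$ in the product $\prod_{k=0}^{n-1}(-\alpha + k)$, turns it into $(-\alpha + n - 1)(-\alpha + n - 2)\cdots(-\alpha)$, which is precisely the numerator of $\binom{-\alpha + n - 1}{n}$. The case $n = 0$ is the trivial identity $1 = 1$.

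For the second identity I would use induction on $n$. The base case $n = 0$ again reads $1 = 1$. For the induction step I add $(-1)^{n+1}\binom{\alpha}{n+1}$ to both sides of the induction hypothesis and apply the recursion in the form $\binom{\alpha}{n+1} = \binom{\alpha-1}{n+1} + \binom{\alpha-1}{n}$, which yields
\[
(-1)^n \binom{\alpha - 1}{n} + (-1)^{n+1}\binom{\alpha}{n+1} = (-1)^{n+1}\binom{\alpha - 1}{n+1},
\]
i.e. the asserted formula with $n$ replaced by $n + 1$.

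Since every step is a finite algebraic manipulation of rational expressions, there is no genuine analytic obstacle; the only points that require a little care are the sign bookkeeping and the reversal of the order of the factors in the numerator in the first part. Alternatively, one can simply refer to \cite{knuth1994} and \cite{koenigsberger2004} as already indicated in the statement.
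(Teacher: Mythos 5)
Your argument is correct. Note that the paper does not actually prove this proposition: it states explicitly that ``Proofs of the following propositions can be found in \cite{knuth1994} and \cite{koenigsberger2004}'' and leaves it at that, so the comparison here is between your self-contained verification and a bare citation. Both of your steps check out. For the first identity, distributing the $n$ signs over the numerator gives $\prod_{k=0}^{n-1}(-\alpha+k)$, and reversing the factors yields $(-\alpha+n-1)(-\alpha+n-2)\cdots(-\alpha)$, which is indeed the numerator of $\binom{-\alpha+n-1}{n}$ since $(-\alpha+n-1)-n+1=-\alpha$; the empty-product convention handles $n=0$. For the second identity, the Pascal-type recursion $\binom{\alpha}{n}=\binom{\alpha-1}{n}+\binom{\alpha-1}{n-1}$ is a polynomial identity in $\alpha$ (your factorization $\alpha=(\alpha-n)+n$ establishes it for all complex $\alpha$, not just integers), and the induction step telescopes exactly as you write: the terms $(-1)^n\binom{\alpha-1}{n}$ and $(-1)^{n+1}\binom{\alpha-1}{n}$ cancel, leaving $(-1)^{n+1}\binom{\alpha-1}{n+1}$. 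This is the standard ``upper negation'' plus ``alternating partial sums'' pair from \cite{knuth1994}; your writeup simply makes the paper self-contained at no real cost.
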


\begin{proposition}[Binomial series ({\cite[p.~65 \& p.~73]{koenigsberger2004}})]
Let $\alpha \in \C$.
The binomial power series is defined by
\begin{equation*}
    (1 + z)^\alpha \coloneqq \sum_{k = 0}^\infty \binom{\alpha}{k} z^k.
\end{equation*}
The series converges absolutely in $B(0, 1)$.
In particular, the mapping $\C_{\abs{\cdot} > 1} \rightarrow \C, z \mapsto (1 - z^{-1})^\alpha$ is holomorphic.
For each $\alpha, \beta \in \C$ we have $(1 + z)^\alpha (1 + z)^\beta = (1 + z)^{\alpha + \beta}$.
\end{proposition}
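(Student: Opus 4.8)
The plan is to establish the three assertions of the proposition in turn: absolute convergence of the series on $B(0,1)$, holomorphy of $z \mapsto (1 - z^{-1})^\alpha$ on $\C_{\abs{\cdot} > 1}$, and the exponent law $(1 + z)^\alpha (1 + z)^\beta = (1 + z)^{\alpha + \beta}$. For the convergence I would first dispose of the case $\alpha \in \N$: then the factor $\alpha - n + 1$ appearing in Proposition~\ref{p:binom} vanishes for $n = \alpha + 1$, so $\binom{\alpha}{k} = 0$ for all $k > \alpha$, the series is a polynomial and converges on all of $\C$. If $\alpha \notin \N$, then $\binom{\alpha}{k} \neq 0$ for every $k$ and the ratio test applies: by the definition of the binomial coefficient,
\begin{equation*}
    \abs{\frac{\binom{\alpha}{k + 1} z^{k + 1}}{\binom{\alpha}{k} z^{k}}} = \abs{\frac{\alpha - k}{k + 1}} \abs{z} \longrightarrow \abs{z} \quad (k \to \infty),
\end{equation*}
whence the series converges absolutely for $\abs{z} < 1$ and diverges for $\abs{z} > 1$, so that its radius of convergence equals $1$.

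For the holomorphy claim I would use the standard fact that a power series is holomorphic on the interior of its disc of convergence, so that $w \mapsto (1 + w)^\alpha = \sum_{k = 0}^\infty \binom{\alpha}{k} w^k$ is holomorphic on $B(0, 1)$. Since $z \mapsto -z^{-1}$ is holomorphic on $\C_{\abs{\cdot} > 1}$ with $\abs{-z^{-1}} = \abs{z}^{-1} < 1$, it maps $\C_{\abs{\cdot} > 1}$ into $B(0, 1)$, and hence $z \mapsto (1 - z^{-1})^\alpha = (1 + (-z^{-1}))^\alpha$ is a composition of holomorphic functions, therefore holomorphic on $\C_{\abs{\cdot} > 1}$.

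For the exponent law, fix $z \in B(0, 1)$. As both series converge absolutely, the Cauchy product theorem gives
\begin{equation*}
    (1 + z)^\alpha (1 + z)^\beta = \sum_{n = 0}^\infty \Bigl( \sum_{j = 0}^n \binom{\alpha}{j} \binom{\beta}{n - j} \Bigr) z^n,
\end{equation*}
so it suffices to prove the Vandermonde convolution identity $\sum_{j = 0}^n \binom{\alpha}{j} \binom{\beta}{n - j} = \binom{\alpha + \beta}{n}$ for all $\alpha, \beta \in \C$ and $n \in \N$. For fixed $n$ both sides are polynomial functions of $(\alpha, \beta)$; for $\alpha, \beta \in \N$ they agree, since expanding $(1 + z)^\alpha (1 + z)^\beta = (1 + z)^{\alpha + \beta}$ with the elementary finite binomial theorem and comparing coefficients of $z^n$ yields exactly this. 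A polynomial in two variables vanishing on $\N \times \N$ vanishes identically (freeze one variable and use that a one-variable polynomial with infinitely many zeros is zero, then iterate), so the identity holds for all complex $\alpha, \beta$; substituting it back gives $(1 + z)^\alpha (1 + z)^\beta = \sum_{n = 0}^\infty \binom{\alpha + \beta}{n} z^n = (1 + z)^{\alpha + \beta}$.

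I expect the only step needing genuine care to be the passage from natural to complex parameters in the Vandermonde identity; the remainder consists of routine applications of the ratio test, of the holomorphy of power series together with the chain rule, and of the Cauchy product theorem. As an alternative one could establish Vandermonde by induction on $n$ from the Pascal recursion $\binom{\gamma}{n} = \binom{\gamma - 1}{n} + \binom{\gamma - 1}{n - 1}$, but the polynomial-identity argument is shorter.
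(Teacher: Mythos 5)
Your proof is correct; note that the paper itself gives no proof of this proposition but simply cites K\"onigsberger, and your argument (ratio test for the radius of convergence, holomorphy of a power series composed with $z \mapsto -z^{-1}$, and the Cauchy product reduced to the Vandermonde convolution via the polynomial-identity trick) is essentially the standard textbook route found there. The only point worth flagging is cosmetic: in the Cauchy-product step for natural $\alpha,\beta$ you implicitly use that $\binom{\alpha}{j}=0$ for $j>\alpha$, which indeed follows from the product formula in Proposition~\ref{p:binom}, so the finite and infinite expansions agree.
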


Binomial coefficients can be expressed with the gamma function.

\begin{lemma}[Falling factorial ({\cite[p.~164]{knuth1994}})]\label{l:eq-binom-gamma}
With the falling factorial
\begin{equation*}
    (x)^{(n)} \coloneqq \frac{\Gamma(x + 1)}{\Gamma(x - n + 1)}
    , \qquad x \in \C \setminus \Z, \qquad n \in \N,
\end{equation*}
we have for each $\alpha \in \C \setminus \Z$ and $n \in \N$
\begin{equation}\label{e:factorial-vs-binom}
    (-1)^n \binom{\alpha}{n} = \binom{-\alpha + n - 1}{n} = \frac{1}{\Gamma(-\alpha)} (n - (1 + \alpha))^{(-(1 + \alpha))}.
\end{equation}
\end{lemma}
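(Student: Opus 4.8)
The first equality in \eqref{e:factorial-vs-binom} is precisely the first identity of Proposition \ref{p:binom}, so it remains to establish the second one, and the plan is to compute both sides explicitly in terms of the gamma function. First I would record that, since $\alpha \in \C \setminus \Z$, neither $-\alpha$ nor $n - \alpha$ is a non-positive integer, so $\Gamma(-\alpha)$ and $\Gamma(n - \alpha)$ are finite and nonzero and the falling factorial on the right-hand side is well defined; unwinding the defining formula gives
\begin{equation*}
    (n - (1 + \alpha))^{(-(1 + \alpha))} = \frac{\Gamma\bigl((n - (1 + \alpha)) + 1\bigr)}{\Gamma\bigl((n - (1 + \alpha)) - (-(1 + \alpha)) + 1\bigr)} = \frac{\Gamma(n - \alpha)}{\Gamma(n + 1)} = \frac{\Gamma(n - \alpha)}{n!}.
\end{equation*}

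Next I would expand the binomial coefficient directly from its definition in Proposition \ref{p:binom},
\begin{equation*}
    \binom{-\alpha + n - 1}{n} = \frac{1}{n!} \prod_{j = 0}^{n - 1} (-\alpha + n - 1 - j) = \frac{1}{n!} \prod_{j = 0}^{n - 1} (-\alpha + j),
\end{equation*}
the last step being merely a reversal of the order of the factors. Applying the functional equation $\Gamma(z + 1) = z\Gamma(z)$ inductively for $z = -\alpha, -\alpha + 1, \dots, -\alpha + n - 1$ (all admissible because $\alpha \notin \Z$) yields $\prod_{j = 0}^{n - 1} (-\alpha + j) = \Gamma(n - \alpha)/\Gamma(-\alpha)$. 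Substituting this and comparing with the previous display gives
\begin{equation*}
    \binom{-\alpha + n - 1}{n} = \frac{\Gamma(n - \alpha)}{\Gamma(-\alpha)\, n!} = \frac{1}{\Gamma(-\alpha)} (n - (1 + \alpha))^{(-(1 + \alpha))},
\end{equation*}
which is the assertion.

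There is no serious obstacle here: the only point that needs care is the bookkeeping with the poles of $\Gamma$, namely verifying that the hypothesis $\alpha \in \C \setminus \Z$ is exactly what keeps every argument of $\Gamma$ away from $\Z_{\leq 0}$, so that the repeated use of $\Gamma(z + 1) = z\Gamma(z)$ is legitimate and the falling factorial notation is meaningful. The induction on $n$ that turns the finite product into a ratio of gamma values is routine and could equally well be phrased as a telescoping identity.
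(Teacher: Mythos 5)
Your argument is correct. Note that the paper itself gives no proof of this lemma; it simply cites Knuth, so there is no ``paper approach'' to compare against. Your computation is the standard verification: reduce the falling factorial to $\Gamma(n-\alpha)/n!$, expand $\binom{-\alpha+n-1}{n}$ as the product $\frac{1}{n!}\prod_{j=0}^{n-1}(-\alpha+j)$, and telescope via $\Gamma(z+1)=z\Gamma(z)$, with the hypothesis $\alpha\in\C\setminus\Z$ keeping every gamma argument off the poles. One small point worth making explicit (which you implicitly handle by ``unwinding the defining formula''): the lemma applies the falling factorial with exponent $-(1+\alpha)\notin\N$, whereas the displayed definition only covers $n\in\N$, so one must read $(x)^{(y)}\coloneqq\Gamma(x+1)/\Gamma(x-y+1)$ for complex $y$ as the intended extension; your computation does exactly that and is otherwise complete, including the empty-product case $n=0$.
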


\begin{lemma}\label{lem:binom-estimate}
Let $\alpha \in (0, 1)$ and $\rho > 1$.
Then we have for each $z \in S_\rho$
\begin{equation*}
    (1 - \rho^{-1})^\alpha \leq \abs{(1 - z^{-1})^\alpha}.
\end{equation*}
\end{lemma}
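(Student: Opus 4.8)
The plan is to keep everything inside the binomial framework already set up, reducing the claim to the sign of the binomial coefficients and the reverse triangle inequality. Put $w \coloneqq z^{-1}$ and $r \coloneqq \abs{w} = \rho^{-1}$, so that $r \in (0,1)$ because $\rho > 1$; hence $w \in B(0,1)$ and, by the binomial series,
\begin{equation*}
  (1 - w)^\alpha = \sum_{k = 0}^\infty (-1)^k \binom{\alpha}{k} w^k
\end{equation*}
with absolute convergence. Write $c_k \coloneqq (-1)^k \binom{\alpha}{k}$. The first point is that $c_0 = 1$ while $c_k < 0$ for every $k \geq 1$: by Proposition~\ref{p:binom},
\begin{equation*}
  c_k = \binom{k - \alpha - 1}{k} = \frac{(k - 1 - \alpha)(k - 2 - \alpha) \cdots (1 - \alpha)(-\alpha)}{k!},
\end{equation*}
and for $\alpha \in (0,1)$ and $k \geq 1$ exactly one factor of the numerator, namely $-\alpha$, is negative while all the others are positive.

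The second step is to evaluate the series at the \emph{real} point $r$. Since $\sum_{k = 0}^\infty c_k r^k = (1 - r)^\alpha$ and $c_k < 0$ for $k \geq 1$, this yields
\begin{equation*}
  \sum_{k = 1}^\infty \abs{c_k}\, r^k = - \sum_{k = 1}^\infty c_k r^k = c_0 - (1 - r)^\alpha = 1 - (1 - r)^\alpha.
\end{equation*}

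The third step is the estimate itself. For $z \in S_\rho$ we have $\abs{w} = r$, so by the reverse triangle inequality together with absolute convergence,
\begin{equation*}
  \abs{(1 - w)^\alpha} = \abs{1 + \sum_{k = 1}^\infty c_k w^k} \geq 1 - \sum_{k = 1}^\infty \abs{c_k}\, \abs{w}^k = 1 - \sum_{k = 1}^\infty \abs{c_k}\, r^k = (1 - r)^\alpha,
\end{equation*}
which, after substituting back $w = z^{-1}$ and $r = \rho^{-1}$, is exactly the asserted inequality.

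The only delicate point is the sign computation for the $c_k$ and the rearrangement of the absolutely convergent series; both are routine. Alternatively one can avoid the coefficient analysis entirely by identifying the binomial series with the principal branch $\exp\!\big(\alpha \operatorname{Log}(1 - w)\big)$ — the two functions are holomorphic on $B(0,1)$, agree at $w = 0$, and solve the same linear ODE $(1 - w) y' = -\alpha y$, hence coincide — from which $\abs{(1 - w)^\alpha} = \abs{1 - w}^\alpha \geq (1 - \abs{w})^\alpha$ follows immediately from the reverse triangle inequality and the monotonicity of $t \mapsto t^\alpha$ on $\R_{> 0}$; this route merely trades the sign bookkeeping for the branch identification, and either way the argument is short.
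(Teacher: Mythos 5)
Your argument is correct. The paper starts from the same key fact --- that $(-1)^k\binom{\alpha}{k}<0$ for $k\ge 1$ --- but organizes the estimate as an induction on the partial sums $\sum_{k=0}^n(-1)^k\binom{\alpha}{k}z^{-k}$, comparing each one to the corresponding real partial sum at $z=\rho$ via the lower triangle inequality and then letting $n\to\infty$. You instead make a single one-shot estimate: split off the $k=0$ term, bound the modulus of the tail by $\sum_{k\ge1}\abs{c_k}\rho^{-k}$, and evaluate that sum in closed form as $1-(1-\rho^{-1})^\alpha$ by substituting the real point $\rho^{-1}$ into the binomial series. This is shorter, and it quietly sidesteps a delicate point in the inductive route: the step from $\bigl\lvert\,\lvert A\rvert + d\,\bigr\rvert$ (with $d<0$) down to $\bigl\lvert\sum_{k=0}^{n+1}c_k\rho^{-k}\bigr\rvert$ needs the positivity of the real partial sums, not just the induction hypothesis $\lvert A\rvert\ge\lvert B\rvert$; your version never has to subtract two absolute values against each other, because the tail sum is known exactly. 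Your sketched alternative via the principal branch, $\abs{(1-w)^\alpha}=\abs{1-w}^\alpha\ge(1-\abs{w})^\alpha$, is genuinely different from the paper's proof and is the most economical of the three; its only overhead is identifying the binomial series with $\exp(\alpha\operatorname{Log}(1-w))$, which your ODE (or identity-theorem) argument supplies, and it has the side benefit of proving the statement for all $\alpha>0$ rather than only $\alpha\in(0,1)$.
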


\begin{proof}
Let $z \in S_\rho$.
For every $n \in \Z_{\geq 1}$ we observe that $(-1)^n \binom{\alpha}{n} < 0$ and therefore $(-1)^n \binom{\alpha}{n} z^{-n} = -\abs{\binom{\alpha}{n}} z^{-n}$.
We show by induction that for every $n \in \N$
\begin{equation*}
    \abs{\sum_{k = 0}^n (-1)^k \binom{\alpha}{k} z^{-k}} \geq \abs{\sum_{k = 0}^n (-1)^k \binom{\alpha}{k} \rho^{-k}}
\end{equation*}
and when letting $n$ tend to infinity the inequality follows.
The induction basis is trivial.
For the induction step for $n \in \N$ we use the lower triangle inequality to obtain
\begin{align*}
    \abs{\sum_{k = 0}^{n + 1} (-1)^k \binom{\alpha}{k} z^{-k}} &= \abs{\sum_{k = 0}^n (-1)^k \binom{\alpha}{k} z^{-k} + (-1)^{n + 1} \binom{\alpha}{n + 1} z^{-(n + 1)}}
    \\
    &\geq \abs{\abs{\sum_{k = 0}^n (-1)^k \binom{\alpha}{k} z^{-k}} - \abs{(-1)^{n + 1} \binom{\alpha}{n + 1} z^{-(n + 1)}}}
    \\
    &= \abs{\abs{\sum_{k = 0}^n (-1)^k \binom{\alpha}{k} z^{-k}} + (-1)^{n + 1} \binom{\alpha}{n + 1} \rho^{-(n + 1)}}
    \\
    &\geq \abs{\sum_{k = 0}^{n + 1} (-1)^k \binom{\alpha}{k} \rho^{-k}}.  \qedhere
\end{align*}
\end{proof}

\subsection{Fractional difference operators}

Let $V$ be a real or complex vector space.

The fractional sum can be motivated by the iterated sum formula and is also related to iterating the backward difference operator (see e.g.\ \cite{kuttner1957}). 
For $\alpha \in \R_{> 0}$ the fractional sum $\nabla^{-\alpha} \colon V^\N \to V^\N$ is defined by (cf.\ \cite[p.\ 3]{atici2009})
\begin{equation}\label{e:FI-op}
   (\nabla^{-\alpha} u)_n 
   = 
   \sum_{k = 0}^n \binom{n - k + \alpha - 1}{n - k} u_k 
   = 
   \sum_{k = 0}^n (-1)^k \binom{-\alpha}{k} u_{n - k}.
\end{equation}
There is also a definition motivated by iterating the forward difference operator which is studied at least since \cite{kuttner1957} and can be found in \cite[p.\ 3]{atici2009} as well.
Note that $(\nabla^{-\alpha} u)_n$ in general depends on $u_0, \dots, u_n$.

The approach to defining the fractional differential operators in the Riemann-Liouville and Caputo sense (cf.\ \cite{Diethelm2004}) was applied mutatis mutandis to difference operators (see e.g.\ \cite{Qasem2013} and the references therein).
Recall that for $\Delta: V^\N \rightarrow V^\N, u \mapsto (u_{n + 1} - u_n)_\N$ we have $(\Delta u)_n = (\nabla u)_{n + 1}$ for $n \in \N$. For $\alpha \in (0, 1)$
    the Riemann-Liouville forward fractional difference operator is defined by (cf.\ \cite[p.\ 3813]{Lizama2017})
\begin{equation}\label{e:RL-op}
    \Delta^{\alpha}: V^\N \rightarrow V^\N, \qquad u \mapsto \Delta \nabla^{-(1 - \alpha)} u.
\end{equation}
The Caputo forward fractional difference operator is defined by (cf. \cite[p. 3813]{Lizama2017})
\begin{equation}\label{e:C-op}
    \Delta_C^\alpha: V^\N \rightarrow V^\N, \qquad u \mapsto \nabla^{-(1 - \alpha)} \Delta u.
\end{equation}
In this paper we study sequences in a Hilbert space $V = H$ on $\Z$
	and define a fractional difference sum operator using the binomial series and a functional calculus
	which is not purely algebraic as in the case of $\nabla^{-\alpha}$.
The connection between operators defined on $H^\Z$ with those defined on $H^\N$ will be causality
    and we analyze how the Riemann-Liouville and the Caputo operator fit into the calculus developed for sequences in $H^\Z$.
An important step for the development of the discrete, functional analytic framework which is introduced in this paper has been done in the continuous case for fractional derivatives in \cite{Picard:Trostorff:Waurick2015}.
Lastly we study the asymptotic stability of the zero solution of a linear fractional difference equation with the Riemann-Liouville and the Caputo forward difference operator.
The interest in the study of linear problems in the context of stability analysis stems from Lyapunov's first method, which has been analyzed in \cite{Siegmund2016} for fractional differential equations.
The results regarding asymptotic stability will be in terms of the Matignon criterion (cf.\ \cite{Matignon1998}), however, for bounded operators on a Hilbert space $H$
	and will be compared to those in \cite{Qasem2013} and \cite{czermak2015}.
A useful tool when analyzing the asymptotic stability of linear problems is the $\cZ$ transform which is also used in \cite{Qasem2013} and \cite{czermak2015}
    but which is studied here for sequences in $H^\Z$.
Asymptotic stability has also been studied using the Riemann-Liouville and the Caputo backward difference operators in \cite{czermak2012} and \cite{Lizama2017}.

\section{Exponentially weighted $\ell_p$ spaces}

We denote by $(H, \norm{\cdot}_H)$ a complex and separable Hilbert space.
The scalar product $\left<\cdot, \cdot\right>_H$ on $H$ shall be conjugate linear in the first argument and linear in the second argument.
We recall several of the concepts of weighted $\ell_{p, \rho}(\Z; H)$ spaces and the $\cZ$ transform (see also \cite{Siegmund2018}).

\begin{lemma}[Exponentially weighted $\ell_p$ spaces \cite{Siegmund2018}]\label{lem:ell2rho}
Let  $1 \leq p < \infty$, $\rho >0$. Define
\begin{align*}
   \ell_{p,\rho}(\Z; H)
   &\coloneqq
   \setm{x \in H^\Z}{\sum_{k \in \Z} \norm{x_k}_H^p \rho^{-p k} < \infty},
\\
   \ell_{\infty,\rho}(\Z; H)
   &\coloneqq
   \setm{x \in H^\Z}{\sup_{k \in \Z} \norm{x_k}_H \rho^{-k} < \infty}.
\end{align*}
Then $\ell_{p,\rho}(\Z; H)$ and $\ell_{\infty,\rho}(\Z; H)$ are Banach spaces with norms
\begin{equation*}
   \norm{x}_{\ell_{p,\rho}(\Z; H)} 
   \coloneqq
   \Big( \sum_{k \in \Z} \norm{x_k}_H^p \rho^{-p k} \Big)^{\frac{1}{p}}
   \qquad
   (x \in \ell_{p,\rho}(\Z; H))
\end{equation*}
and
\begin{equation*}
   \norm{x}_{\ell_{\infty,\rho}(\Z; H)}
   \coloneqq
   \sup_{k \in \Z} \norm{x_k}_H \rho^{-k}
   \qquad
   (x \in \ell_{\infty,\rho}(\Z; H)),
\end{equation*}
respectively. Moreover, $\ell_{2,\rho}(\Z; H)$ is a Hilbert space with the inner product
\begin{equation*}
   \langle x, y \rangle_{\ell_{2,\rho}(\Z; H)}
   \coloneqq
   \sum_{k \in \Z} \big\langle x_k, y_k \big\rangle_{\!H} \rho^{-2 k}
   \qquad
   (x, y \in \ell_{2,\rho}(\Z; H)).
\end{equation*}
We write $\ell_{p}(\Z; H) \coloneqq \ell_{p,1}(\Z; H)$ for $1 \leq p \leq \infty$.
\end{lemma}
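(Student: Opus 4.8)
The plan is to reduce the entire statement to the corresponding (classical) facts about the \emph{unweighted} spaces $\ell_p(\Z; H)$ by exhibiting the exponential weight as an isometric isomorphism. First I would introduce the linear bijection
\[
  M_\rho \colon H^\Z \to H^\Z, \qquad (M_\rho x)_k \coloneqq \rho^{-k} x_k \quad (k \in \Z),
\]
with inverse $(M_\rho^{-1} x)_k = \rho^{k} x_k$. Directly from the definitions in the statement one reads off that $x \in \ell_{p,\rho}(\Z; H)$ if and only if $M_\rho x \in \ell_p(\Z; H)$, that $\norm{x}_{\ell_{p,\rho}(\Z; H)} = \norm{M_\rho x}_{\ell_p(\Z; H)}$ for all $1 \leq p \leq \infty$, and that $\langle x, y\rangle_{\ell_{2,\rho}(\Z; H)} = \langle M_\rho x, M_\rho y\rangle_{\ell_2(\Z; H)}$. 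Hence $M_\rho$ restricts to a norm-preserving linear bijection $\ell_{p,\rho}(\Z; H) \to \ell_p(\Z; H)$, and it suffices to establish the claims for $\rho = 1$ and transport them back.

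Second, I would check the norm axioms. Definiteness and absolute homogeneity are immediate, and the triangle inequality is Minkowski's inequality applied to the scalar sequence $(\norm{x_k}_H \rho^{-k})_k \in \ell_p(\Z)$, using the triangle inequality in $H$ termwise followed by monotonicity of the scalar $\ell_p(\Z)$ norm. For $p = 2$, termwise Cauchy--Schwarz in $H$ together with the scalar Cauchy--Schwarz inequality on $\Z$ shows that the defining series for $\langle x, y\rangle_{\ell_{2,\rho}(\Z; H)}$ converges absolutely whenever $x, y \in \ell_{2,\rho}(\Z; H)$; conjugate symmetry and sesquilinearity are inherited from $\langle \cdot, \cdot\rangle_H$, positivity from positivity in $H$, and $\langle x, x\rangle_{\ell_{2,\rho}(\Z; H)} = \norm{x}_{\ell_{2,\rho}(\Z; H)}^2$ by inspection, so the norm is induced by this inner product.

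Third---the only point with genuine content---I would prove completeness of $\ell_p(\Z; H)$ (and $\ell_\infty(\Z; H)$). Given a Cauchy sequence $(x^{(j)})_j$, for each fixed $k$ the coordinate sequence $(x^{(j)}_k)_j$ is Cauchy in $H$, since $\norm{x^{(j)}_k - x^{(i)}_k}_H \leq \norm{x^{(j)} - x^{(i)}}$, hence converges to some $x_k \in H$ by completeness of $H$; set $x \coloneqq (x_k)_k$. For $p < \infty$, fix $\varepsilon > 0$ and $N$ with $\norm{x^{(j)} - x^{(i)}} < \varepsilon$ for $i, j \geq N$; for any finite $F \subseteq \Z$ let $i \to \infty$ in $\sum_{k \in F} \norm{x^{(j)}_k - x^{(i)}_k}_H^p < \varepsilon^p$ to obtain $\sum_{k \in F} \norm{x^{(j)}_k - x_k}_H^p \leq \varepsilon^p$, and take the supremum over $F$ (Fatou/monotone convergence on $\Z$) to get $x^{(j)} - x \in \ell_p(\Z; H)$ with $\norm{x^{(j)} - x} \leq \varepsilon$; thus $x = x^{(j)} - (x^{(j)} - x) \in \ell_p(\Z; H)$ and $x^{(j)} \to x$. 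The case $p = \infty$ is the analogous, simpler supremum argument. Applying the isometry $M_\rho^{-1}$ yields completeness of $\ell_{p,\rho}(\Z; H)$ and $\ell_{\infty,\rho}(\Z; H)$, and since $\ell_{2,\rho}(\Z; H)$ is then a complete space whose norm comes from the inner product above, it is a Hilbert space. I expect the completeness step to be the main (and essentially only) obstacle; the rest is bookkeeping, and the whole lemma is the weighted, $H$-valued analogue of the standard fact that $\ell_p$ is Banach and $\ell_2$ Hilbert.
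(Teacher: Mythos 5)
Your proposal is correct. Note that the paper itself gives no proof of this lemma---it is quoted verbatim from \cite{Siegmund2018}---so there is no in-paper argument to compare against; your reduction to the unweighted case via the isometry $(M_\rho x)_k = \rho^{-k}x_k$, followed by the standard coordinate-wise Cauchy/finite-subset completeness argument and the identification of the inner product for $p=2$, is exactly the expected proof and contains no gaps.
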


\begin{proposition}[One sided weighted sequence spaces \cite{Siegmund2018}]\label{lem:one-sided}
For $1 \leq p \leq \infty$, $a \in \Z$ and $\rho > 0$ we define
\begin{equation*}
   \ell_{p,\rho}(\Z_{\geq a}; H)
   \coloneqq
   \setm{x|_{\Z_{\geq a}}}{x \in \ell_{p,\rho}(\Z; H)}.
\end{equation*}
And for $1 \leq p \leq \infty$, $\rho > 0$, $a \in \Z$
	and for $x \in H^{\Z_{\geq a}}$, we define $\iota x \in H^{\Z}$ by
\begin{equation*}
   (\iota x)_k
   \coloneqq
   \begin{cases}
      0 & \text{if } k < a,
   \\
      x_k & \text{if } k \geq a.
   \end{cases}
\end{equation*}
Then $\ell_{p,\rho}(\Z_{\geq a}; H)$ is a Banach space with norm $\norm{\cdot}_{\ell_{p,\rho}(\Z_{\geq a}; H)} \coloneqq  \norm{\iota \cdot}_{\ell_{p,\rho}(\Z; H)}$, and
\begin{equation*}
   \iota \colon \ell_{p,\rho}(\Z_{\geq a}; H) \hookrightarrow \ell_{p,\rho}(\Z; H)
\end{equation*}
is an isometric embedding. Write $\ell_{p,\rho}(\Z_{\geq a}; H) \subseteq \ell_{p,\rho}(\Z; H)$.
\\[1ex]
For $1 \leq p < q \leq \infty$, $\rho, \eps > 0$, $a \in \Z$ we have
\begin{align*}
	(a) \qquad &\ell_{p,\rho}(\Z_{\geq a}; H) \subsetneq \ell_{q,\rho}(\Z_{\geq a}; H),
	\\
	(b) \qquad &\ell_{q,\rho}(\Z_{\geq a}; H) \subsetneq \ell_{p,\rho + \eps}(\Z_{\geq a}; H).
\end{align*}
\end{proposition}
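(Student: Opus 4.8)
The plan is to identify $\ell_{p,\rho}(\Z_{\geq a};H)$ with the closed subspace of $\ell_{p,\rho}(\Z;H)$ consisting of those sequences that vanish on $\Z_{<a}$, and to reduce (a) and (b) to the classical $\ell_p$-inclusions after absorbing the weight $\rho^{-k}$. First I would observe that for $w\in H^{\Z_{\geq a}}$ the zero-extension $\iota w$ vanishes off $\Z_{\geq a}$, so $\norm{\iota w}_{\ell_{p,\rho}(\Z;H)}$ equals $\big(\sum_{k\geq a}\norm{w_k}_H^p\rho^{-pk}\big)^{1/p}$, resp.\ $\sup_{k\geq a}\norm{w_k}_H\rho^{-k}$ when $p=\infty$; consequently, if $w=x|_{\Z_{\geq a}}$ for some $x\in\ell_{p,\rho}(\Z;H)$ this quantity is bounded by $\norm{x}_{\ell_{p,\rho}(\Z;H)}$ and hence finite, while conversely $\iota w\in\ell_{p,\rho}(\Z;H)$ gives $w=(\iota w)|_{\Z_{\geq a}}\in\ell_{p,\rho}(\Z_{\geq a};H)$. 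Thus $\ell_{p,\rho}(\Z_{\geq a};H)=\setm{w\in H^{\Z_{\geq a}}}{\iota w\in\ell_{p,\rho}(\Z;H)}$, the map $\iota$ is a well-defined linear injection on it, and $\norm{\cdot}_{\ell_{p,\rho}(\Z_{\geq a};H)}=\norm{\iota\cdot}_{\ell_{p,\rho}(\Z;H)}$ is a norm for which $\iota$ is by construction an isometry. For completeness I would check that $\ran\iota=\setm{y\in\ell_{p,\rho}(\Z;H)}{y_k=0 \text{ for all } k<a}$ is closed in $\ell_{p,\rho}(\Z;H)$, which is immediate because norm convergence there forces coordinatewise convergence; a closed subspace of the Banach space $\ell_{p,\rho}(\Z;H)$ (Lemma \ref{lem:ell2rho}) is complete, and $\iota$ is an isometric isomorphism onto it, so $\ell_{p,\rho}(\Z_{\geq a};H)$ is a Banach space.

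For (a), putting $y_k\coloneqq x_k\rho^{-k}$ gives $\norm{x}_{\ell_{p,\rho}(\Z_{\geq a};H)}=\norm{(y_k)_{k\geq a}}_{\ell_p(\Z_{\geq a};H)}$, so it suffices to prove the unweighted bound $\norm{y}_{\ell_q(\Z_{\geq a};H)}\leq\norm{y}_{\ell_p(\Z_{\geq a};H)}$: the case $y=0$ is trivial, and otherwise after normalising to $\norm{y}_{\ell_p}=1$ we have $\norm{y_k}_H\leq1$, hence $\norm{y_k}_H^q\leq\norm{y_k}_H^p$ for $q<\infty$ (resp.\ $\norm{y_k}_H\leq1$ for $q=\infty$), and summing, resp.\ taking the supremum, yields the claim. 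Strictness follows by exhibiting, for a fixed unit vector $e\in H$, the sequence $x_k\coloneqq\rho^k(1+k-a)^{-1/p}e$, for which $\sum_{k\geq a}\norm{x_k}_H^q\rho^{-qk}=\sum_{j\geq1}j^{-q/p}<\infty$ because $q/p>1$ (and $\sup_{k\geq a}\norm{x_k}_H\rho^{-k}=1$ if $q=\infty$), whereas $\sum_{k\geq a}\norm{x_k}_H^p\rho^{-pk}=\sum_{j\geq1}j^{-1}=\infty$, so $x\in\ell_{q,\rho}(\Z_{\geq a};H)\setminus\ell_{p,\rho}(\Z_{\geq a};H)$.

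For (b), I would use the chain $\ell_{q,\rho}(\Z_{\geq a};H)\subseteq\ell_{\infty,\rho}(\Z_{\geq a};H)\subseteq\ell_{p,\rho+\eps}(\Z_{\geq a};H)$, the first inclusion being trivial when $q=\infty$ and the special case $(q,\infty)$ of (a) otherwise, and the second because $M\coloneqq\sup_{k\geq a}\norm{x_k}_H\rho^{-k}<\infty$ implies $\norm{x_k}_H^p(\rho+\eps)^{-pk}\leq M^p\big(\rho/(\rho+\eps)\big)^{pk}$, whose sum over $k\geq a$ is a convergent geometric series since $\rho/(\rho+\eps)<1$. For strictness take $x_k\coloneqq(\rho+\tfrac{\eps}{2})^ke$; then $\sum_{k\geq a}\norm{x_k}_H^p(\rho+\eps)^{-pk}=\sum_{k\geq a}\big((\rho+\tfrac{\eps}{2})/(\rho+\eps)\big)^{pk}<\infty$, so $x\in\ell_{p,\rho+\eps}(\Z_{\geq a};H)$, while $\norm{x_k}_H\rho^{-k}=\big((\rho+\tfrac{\eps}{2})/\rho\big)^k\to\infty$, so $x\notin\ell_{q,\rho}(\Z_{\geq a};H)$ for every $q\in[1,\infty]$.

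I do not expect a genuine obstacle here; the only points requiring attention are the case distinctions for the endpoints $p=\infty$ and $q=\infty$ throughout, and the verification that the two counterexamples really have their zero-extensions in the spaces claimed — that is, that the indicated series converge for finite exponents and that the indicated suprema are finite or infinite as asserted at the $\infty$-endpoints. Everything else is bookkeeping with Lemma \ref{lem:ell2rho}.
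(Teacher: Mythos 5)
The paper states this proposition without proof, citing \cite{Siegmund2018}, so there is no in-paper argument to compare against; judged on its own, your proof is correct and complete. The identification of $\ell_{p,\rho}(\Z_{\geq a};H)$ with the closed subspace of $\ell_{p,\rho}(\Z;H)$ of sequences vanishing on $\Z_{<a}$, the reduction of (a) to the unweighted inclusion $\ell_p\subseteq\ell_q$ via $y_k=x_k\rho^{-k}$, the geometric-series argument for (b), and the two witnesses for strictness (the harmonic-type sequence and the sequence with intermediate growth rate $\rho+\tfrac{\eps}{2}$) are all sound, and the endpoint cases $p=\infty$, $q=\infty$ are handled correctly.
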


\begin{definition}
For $x \in H$ and $n \in \Z$ we define $\delta_n x \in H^\Z$ by
\begin{equation*}
    (\delta_n x)_m \coloneqq
    \begin{cases}
    x, &\text{if} \; m = n,
    \\
    0, &\text{if} \; m \neq n,
    \end{cases}
\end{equation*}
and $\chi_{\Z_{\geq n}} x \in H^\Z$ by
\begin{equation*}
    (\chi_{\Z_{\geq n}} x)_m \coloneqq
    \begin{cases}
    x, &\text{if} \; m \geq n,
    \\
    0, &\text{if} \; m < n.
    \end{cases}
\end{equation*}
\end{definition}
Note that for $\rho > 0$, $\delta_n x \in \ell_{p, \rho}(\Z; H)$
    and for $\rho > 1$, $\chi_{\Z_{\geq n}} x \in \ell_{p, \rho}(\Z; H)$.

\begin{lemma}[Shift operator \cite{Siegmund2018}]\label{lem:Shift}
Let $1 \leq p \leq \infty$, $\rho > 0$. Then
\begin{align*}
   \tau \colon \ell_{p,\rho}(\Z; H) & \rightarrow \ell_{p,\rho}(\Z; H),
\\
   (x_k)_{k \in \Z} & \mapsto (x_{k+1})_{k \in \Z},
\end{align*}
is linear, bounded, invertible, and 
\begin{equation*}
   \norm{\tau^n}_{L(\ell_{p,\rho}(\Z; H))} = \rho^n
   \qquad
   (n \in \Z).
\end{equation*}
\end{lemma}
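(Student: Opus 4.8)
The plan is to verify the four assertions in turn, handling all integer powers $\tau^n$ ($n \in \Z$) at once by working with the explicit coordinatewise formula $(\tau^n x)_k = x_{k+n}$ for $k \in \Z$. Linearity of $\tau$, and hence of each $\tau^n$, is immediate from this pointwise description, since addition and scalar multiplication in $\ell_{p,\rho}(\Z; H)$ are defined coordinatewise.

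The core computation is the norm identity, from which boundedness, the precise value of the operator norm, and a route to invertibility all follow. Fix $n \in \Z$ and $x \in \ell_{p,\rho}(\Z; H)$. For $1 \leq p < \infty$ I would write
\[
   \norm{\tau^n x}_{\ell_{p,\rho}(\Z; H)}^p = \sum_{k \in \Z} \norm{x_{k+n}}_H^p \rho^{-pk} = \sum_{j \in \Z} \norm{x_j}_H^p \rho^{-p(j-n)} = \rho^{pn} \norm{x}_{\ell_{p,\rho}(\Z; H)}^p,
\]
using the substitution $j = k + n$, which is a bijection of $\Z$ onto itself, so that no boundary terms are lost. For $p = \infty$ the same substitution gives $\norm{\tau^n x}_{\ell_{\infty,\rho}(\Z; H)} = \sup_{k \in \Z} \norm{x_{k+n}}_H \rho^{-k} = \rho^n \sup_{j \in \Z} \norm{x_j}_H \rho^{-j} = \rho^n \norm{x}_{\ell_{\infty,\rho}(\Z; H)}$. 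In particular $\tau^n x \in \ell_{p,\rho}(\Z; H)$, so $\tau^n$ maps the space into itself and is bounded with $\norm{\tau^n x} = \rho^n \norm{x}$ for every $x$; hence $\norm{\tau^n}_{L(\ell_{p,\rho}(\Z; H))} = \sup_{x \neq 0} \norm{\tau^n x} / \norm{x} = \rho^n$.

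For invertibility I would observe that the formula makes sense for every integer exponent and that $(\tau^n \tau^m x)_k = (\tau^m x)_{k+n} = x_{k+n+m} = (\tau^{n+m} x)_k$, so $\tau^n \tau^m = \tau^{n+m}$ on $\ell_{p,\rho}(\Z; H)$; in particular $\tau \tau^{-1} = \tau^{-1} \tau = \tau^0 = 1$, so $\tau$ is invertible with inverse the right shift $\tau^{-1}$, and the notation $\tau^n$ is consistent with iterated composition. This also confirms the operator-norm formula $\norm{\tau^n} = \rho^n$ for negative $n$.

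There is no real obstacle here; the one point that deserves a moment's care is that the index substitution is a bijection of all of $\Z$, so — unlike the one-sided spaces of Proposition~\ref{lem:one-sided} — there are no truncation effects, and the shift acts as an exact dilation of the norm by the factor $\rho^n$ rather than merely being bounded by it. It is also worth noting in passing that for $p = 2$ the identity $\langle \tau^n x, \tau^n y \rangle_{\ell_{2,\rho}(\Z; H)} = \rho^{2n} \langle x, y \rangle_{\ell_{2,\rho}(\Z; H)}$ follows from the same substitution, so that $\rho^{-n} \tau^n$ is unitary on the Hilbert space $\ell_{2,\rho}(\Z; H)$.
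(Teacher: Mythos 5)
Your proposal is correct: the index substitution $j=k+n$ is a bijection of $\Z$, so the norm identity $\norm{\tau^n x}=\rho^n\norm{x}$ holds exactly, and linearity, boundedness, the operator norm, and invertibility (via $\tau^n\tau^m=\tau^{n+m}$) all follow as you argue. The paper states this lemma as a recalled result from \cite{Siegmund2018} without reproducing a proof, and your direct computation is precisely the standard argument that reference supplies; your closing observation that $\rho^{-n}\tau^n$ is unitary on $\ell_{2,\rho}(\Z;H)$ is a correct bonus consistent with Lemma~\ref{lem:mo}.
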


\section{$\cZ$ transform}

\begin{lemma}[$L_2$ space on circle and orthonormal basis \cite{Siegmund2018}]
Let $\rho >0$. Define
\begin{equation*}
   L_{2}(S_\rho; H)
   \coloneqq
   \setm{f \colon S_\rho \rightarrow H}{ \int_{S_\rho} \norm{ f(z) }_H^2 \frac{\mathrm{d}z}{|z|} < \infty}.
\end{equation*}
Then $L_{2}(S_\rho; H)$ is a Hilbert space with the inner product
\begin{equation*}
   \langle f, g \rangle_{L_{2}(S_\rho; H)}
   \coloneqq
   \frac{1}{2\pi}
   \int_{S_\rho} \big\langle f(z), g(z) \big\rangle_{\!H} \frac{\mathrm{d}z}{|z|}
   \qquad
   (f, g \in L_{2}(S_\rho; H)).
\end{equation*}
Moreover, let $(\psi_n)_{n \in \Z}$ be an orthonormal basis in $H$.
Then $(p_{k,n})_{k, n \in \Z}$ with
\begin{equation*}
   p_{k,n}(z) \coloneqq \rho^{k} z^{-k} \psi_n
   \qquad (z \in S_\rho)
\end{equation*}
is an orthonormal basis in $L_2(S_\rho; H)$.
\end{lemma}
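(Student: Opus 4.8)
The plan is to reduce the statement to the classical scalar Fourier setting by passing to an arc-length parametrisation of $S_\rho$. Reading $\mathrm{d}z/|z|$ as the arc-length element on $S_\rho$ divided by $|z| = \rho$, the substitution $z = \rho e^{i\theta}$ turns $\tfrac{1}{2\pi}\,\mathrm{d}z/|z|$ into the normalised Lebesgue measure on $[0, 2\pi)$, so that $f \mapsto (\theta \mapsto f(\rho e^{i\theta}))$ is a linear bijection of $L_2(S_\rho; H)$ onto the Bochner space $L^2\big((0, 2\pi), \tfrac{1}{2\pi}\mathrm{d}\theta; H\big)$ which turns the sesquilinear form in the statement into $(f, g) \mapsto \tfrac{1}{2\pi}\int_0^{2\pi} \langle f(\theta), g(\theta)\rangle_H \,\mathrm{d}\theta$. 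Since $z^{-k} = \rho^{-k} e^{-ik\theta}$ on $S_\rho$, this bijection sends $p_{k,n}$ to $\theta \mapsto e^{-ik\theta}\psi_n$. Hence it suffices to show that this Bochner space is a Hilbert space for the indicated inner product and that $(e^{-ik\cdot}\psi_n)_{k, n \in \Z}$ is an orthonormal basis of it.

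For the Hilbert space claim, I would first note that $\theta \mapsto \langle f(\theta), g(\theta)\rangle_H$ is integrable by the Cauchy--Schwarz inequality in $H$ together with $\norm{f(\cdot)}_H, \norm{g(\cdot)}_H \in L^2(0, 2\pi)$, so the form is well defined; it is clearly sesquilinear and Hermitian, and $\langle f, f\rangle$ equals the square of the $L^2$-norm, which vanishes only for the zero element because elements of the Bochner space are equivalence classes modulo equality almost everywhere. Completeness is the Riesz--Fischer theorem for $H$-valued square-integrable functions: from a Cauchy sequence one extracts a subsequence with summable consecutive norm differences, which then converges almost everywhere to a measurable limit dominated by an $L^2$ function, and this limit is the $L^2$-limit of the original sequence. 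Separability of $H$ makes weak and strong measurability agree, so no measurability subtleties arise; alternatively one may simply invoke the standard fact that $L^2$ of a finite measure space with values in a Hilbert space is again a Hilbert space.

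For the basis property, orthonormality is the direct computation
\[
    \Big\langle e^{-ik\cdot}\psi_n,\, e^{-il\cdot}\psi_m\Big\rangle
    = \langle \psi_n, \psi_m\rangle_H \cdot \tfrac{1}{2\pi}\int_0^{2\pi} e^{i(k - l)\theta}\,\mathrm{d}\theta
    = \delta_{nm}\,\delta_{kl},
\]
using orthonormality of $(\psi_n)$ in $H$. For totality, suppose $f \in L^2\big((0,2\pi), \tfrac{1}{2\pi}\mathrm{d}\theta; H\big)$ is orthogonal to every $e^{-ik\cdot}\psi_n$. Fixing $n$, the scalar function $f_n(\theta) \coloneqq \langle \psi_n, f(\theta)\rangle_H$ belongs to $L^2(0, 2\pi)$ and all of its Fourier coefficients vanish, hence $f_n = 0$ almost everywhere by completeness of the trigonometric system. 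As $\Z$ is countable, there is a single null set outside of which $\langle \psi_n, f(\theta)\rangle_H = 0$ for all $n \in \Z$ simultaneously, and since $(\psi_n)_{n \in \Z}$ is an orthonormal basis of $H$ this forces $f(\theta) = 0$ for almost every $\theta$, i.e.\ $f = 0$. Thus the orthonormal system $(e^{-ik\cdot}\psi_n)_{k,n}$ is total and therefore an orthonormal basis, and transporting back along the bijection yields the assertion for $(p_{k,n})_{k,n}$.

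The mathematics here is entirely classical, so there is no real obstacle; the points that need attention are (a) interpreting $\mathrm{d}z/|z|$ correctly as the arc-length element over $|z|$, so that together with the $\tfrac{1}{2\pi}$ prefactor it becomes normalised Lebesgue measure under $z = \rho e^{i\theta}$, and (b) the order of quantifiers in the totality step, where exchanging ``for all $n$'' with ``for almost every $\theta$'' is legitimate precisely because the index set $\Z$ is countable and $H$ is separable. If one insists on a self-contained argument rather than citing it, the vector-valued Riesz--Fischer theorem is the most technical ingredient; everything else is routine.
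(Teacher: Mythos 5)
Your argument is correct: the reparametrisation $z=\rho e^{i\theta}$ reduces everything to the Bochner space $L^2\big((0,2\pi),\tfrac{1}{2\pi}\mathrm{d}\theta;H\big)$, and the completeness, orthonormality and totality steps (including the careful handling of the countable quantifier exchange and of separability of $H$) are all sound. The paper itself offers no proof of this lemma --- it is quoted verbatim from the cited reference \cite{Siegmund2018} --- so there is nothing to compare against; your proof is the standard one and would serve as a correct self-contained justification.
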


\begin{theorem}[$\cZ$ transform \cite{Siegmund2018}]\label{t:ztransform}
Let $\rho >0$.
The operator
\begin{align*}
   \cZ_\rho \colon \ell_{2,\rho}(\Z; H) &\rightarrow L_2(S_\rho; H),
\\
   x & \mapsto \Big( z \mapsto \sum_{k \in \Z} \langle \psi_n, \rho^{-k} x_k\rangle_{\!H} p_{k, n}(z) \Big)
\end{align*}
is well-defined and unitary.
For $x \in \ell_{1, \rho}(\Z; H) \subseteq \ell_{2, \rho}(\Z; H)$ we have
\begin{equation*}
    \cZ_\rho(x) = \left(z \mapsto \sum_{k \in \Z} x_k z^{-k}\right).
\end{equation*}
\end{theorem}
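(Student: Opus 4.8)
The plan is to recognise $\cZ_\rho$ as the unitary operator that identifies an explicit orthonormal basis of $\ell_{2,\rho}(\Z;H)$ with the orthonormal basis $(p_{k,n})_{k,n\in\Z}$ of $L_2(S_\rho;H)$ established above. For $k,n\in\Z$ I would set $e_{k,n}\coloneqq\delta_k(\rho^k\psi_n)\in\ell_{2,\rho}(\Z;H)$. A direct computation with the inner product of $\ell_{2,\rho}(\Z;H)$ shows that $\langle e_{k,n},e_{k',n'}\rangle_{\ell_{2,\rho}(\Z;H)}$ equals $1$ if $(k,n)=(k',n')$ and $0$ otherwise, so $(e_{k,n})_{k,n}$ is an orthonormal system, and the same computation gives $\langle e_{k,n},x\rangle_{\ell_{2,\rho}(\Z;H)}=\langle\psi_n,\rho^{-k}x_k\rangle_H$ for every $x\in\ell_{2,\rho}(\Z;H)$. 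Since $\langle e_{k,n},x\rangle_{\ell_{2,\rho}(\Z;H)}=0$ for all $n$ forces $x_k=0$ (as $(\psi_n)_n$ is an orthonormal basis of $H$), the system $(e_{k,n})_{k,n}$ is in fact an orthonormal basis of $\ell_{2,\rho}(\Z;H)$.

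Next I would observe that the series defining $\cZ_\rho x$ is precisely $\sum_{k,n\in\Z}\langle e_{k,n},x\rangle_{\ell_{2,\rho}(\Z;H)}\,p_{k,n}$. By Parseval's identity in $\ell_{2,\rho}(\Z;H)$ the coefficient family is square-summable with $\ell_2$-norm $\norm{x}_{\ell_{2,\rho}(\Z;H)}$, and since $(p_{k,n})_{k,n}$ is an orthonormal basis of $L_2(S_\rho;H)$ the series converges in $L_2(S_\rho;H)$; hence $\cZ_\rho$ is well-defined, linear and bounded, and it is the operator determined by $e_{k,n}\mapsto p_{k,n}$. Using Parseval on both Hilbert spaces, $\cZ_\rho$ is isometric; its range is closed (being the isometric image of a Hilbert space) and contains every $p_{k,n}$, hence is dense, so $\cZ_\rho$ is onto and therefore unitary.

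It remains to identify $\cZ_\rho$ on $\ell_{1,\rho}(\Z;H)$ with the power series map. I would compare two bounded operators $\ell_{1,\rho}(\Z;H)\to L_2(S_\rho;H)$: the restriction of $\cZ_\rho$, bounded because $\ell_{1,\rho}(\Z;H)\hookrightarrow\ell_{2,\rho}(\Z;H)$ continuously; and $T\colon x\mapsto\big(z\mapsto\sum_{k\in\Z}x_k z^{-k}\big)$, which is well-defined and bounded since for $z\in S_\rho$ the series converges absolutely in $H$ with $\sup_{z\in S_\rho}\norm{\sum_{k}x_k z^{-k}}_H\leq\norm{x}_{\ell_{1,\rho}(\Z;H)}$ and $\int_{S_\rho}\frac{\mathrm{d}z}{\abs{z}}=2\pi$. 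On a finitely supported sequence the two operators agree: by linearity it suffices to treat $x=\delta_k v$ with $v\in H$, where a short computation gives $\cZ_\rho(\delta_k v)(z)=\sum_{n}\langle\psi_n,\rho^{-k}v\rangle_H p_{k,n}(z)=z^{-k}\sum_{n}\langle\psi_n,v\rangle_H\psi_n=z^{-k}v=(T\delta_k v)(z)$. Since finitely supported sequences are dense in $\ell_{1,\rho}(\Z;H)$, the two bounded operators coincide, which is the asserted formula.

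I do not expect a genuine obstacle here; the argument is essentially bookkeeping around two orthonormal bases. The only points that need a little care are the convergence of the double series defining $\cZ_\rho x$ in $L_2(S_\rho;H)$ — which is exactly why one first establishes that $(p_{k,n})_{k,n}$ is an orthonormal basis — and the fact that on $\ell_{1,\rho}(\Z;H)$ the power series is the $L_2(S_\rho;H)$-limit of its partial sums and not merely a pointwise-in-$z$ identity, which is why I prefer the ``two bounded operators agreeing on a dense subspace'' argument over manipulating the sums directly.
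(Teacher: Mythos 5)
Your argument is correct: identifying $\ell_{2,\rho}(\Z;H)$ via the orthonormal basis $e_{k,n}=\delta_k(\rho^k\psi_n)$, sending it to the basis $(p_{k,n})_{k,n}$ of $L_2(S_\rho;H)$, and then recovering the power-series formula on $\ell_{1,\rho}(\Z;H)$ by density of finitely supported sequences is exactly the standard route, and all the computations check out (including the coefficient identity $\langle e_{k,n},x\rangle_{\ell_{2,\rho}(\Z;H)}=\langle\psi_n,\rho^{-k}x_k\rangle_H$). The paper itself gives no proof of this theorem --- it is recalled from \cite{Siegmund2018} --- so there is nothing in the present text to compare against; your proof is a complete and correct substitute.
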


\begin{remark}[$\cZ$ transform of $x \in \ell_{2,\rho}(\Z; H) \setminus \ell_{1,\rho}(\Z; H)$]\label{rem:z-transform}
Let $\rho > 0$, $x \in \ell_{2,\rho}(\Z; H) \setminus \ell_{1,\rho}(\Z; H)$. Then
\begin{equation*}
   \sum_{k \in \Z} x_k z^{-k} 
\end{equation*}
does not necessarily converge for all $z \in S_\rho$.
For example if $H = \C$, $x \in \ell_{2, \rho}(\Z; H) \setminus \ell_{1, \rho}(\Z; H)$ with $x_k \coloneqq \frac{\rho^k}{k}$ and $z = \rho$.
\end{remark}

\begin{lemma}[Shift is unitarily equivalent to multiplication \cite{Siegmund2018}]\label{lem:mo} Let $\rho > 0$. Then 
\[
   \mathcal{Z}_\rho \tau \mathcal{Z}_\rho^* = \mathrm{m},
\]
where $\mathrm{m}$ is the multiplication-by-the-argument operator acting in $L_2(S_\rho;H)$, i.e.,
\begin{align*}
   \mathrm{m}\colon L_2(S_\rho;H) &\to L_2(S_\rho;H),
\\
   f&\mapsto (z\mapsto zf(z)).
\end{align*} 
\end{lemma}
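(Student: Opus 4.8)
The plan is to use that $\cZ_\rho$ is unitary (Theorem~\ref{t:ztransform}) and that both $\tau$ and $\mathrm{m}$ are bounded, so that it suffices to verify the identity on a dense subspace of $\ell_{2,\rho}(\Z;H)$. The convenient choice is $\ell_{1,\rho}(\Z;H)$: it contains all finitely supported $H$-valued sequences and is therefore dense in $\ell_{2,\rho}(\Z;H)$, and on it $\cZ_\rho$ is given by the explicit Laurent-type series $\cZ_\rho(x)(z)=\sum_{k\in\Z}x_k z^{-k}$ from Theorem~\ref{t:ztransform}.

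First I would record the two boundedness facts. The operator $\tau$ maps $\ell_{1,\rho}(\Z;H)$ into itself with $\norm{\tau x}_{\ell_{1,\rho}(\Z;H)}=\rho\norm{x}_{\ell_{1,\rho}(\Z;H)}$, obtained by re-indexing the defining sum; boundedness of $\tau$ on $\ell_{2,\rho}(\Z;H)$ is Lemma~\ref{lem:Shift}. The operator $\mathrm{m}$ is bounded on $L_2(S_\rho;H)$ with $\norm{\mathrm{m}f}_{L_2(S_\rho;H)}=\rho\norm{f}_{L_2(S_\rho;H)}$, since $|z|=\rho$ on $S_\rho$. Hence $\cZ_\rho\tau\cZ_\rho^*$ and $\mathrm{m}$ are both bounded linear operators on $L_2(S_\rho;H)$, and it is enough to see that they agree on the dense subspace $\cZ_\rho(\ell_{1,\rho}(\Z;H))$.

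The computation is then the heart of the argument: for $x\in\ell_{1,\rho}(\Z;H)$ and $z\in S_\rho$, the series $\sum_{k\in\Z}\norm{x_k}_H|z|^{-k}=\sum_{k\in\Z}\norm{x_k}_H\rho^{-k}$ converges, so the substitution $j=k+1$ is legitimate and gives
\begin{equation*}
   \cZ_\rho(\tau x)(z)=\sum_{k\in\Z}(\tau x)_k z^{-k}=\sum_{k\in\Z}x_{k+1}z^{-k}=z\sum_{j\in\Z}x_j z^{-j}=z\,\cZ_\rho(x)(z)=\big(\mathrm{m}\,\cZ_\rho(x)\big)(z).
\end{equation*}
Thus $\cZ_\rho\tau=\mathrm{m}\,\cZ_\rho$ on $\ell_{1,\rho}(\Z;H)$, i.e.\ $\cZ_\rho\tau\cZ_\rho^*=\mathrm{m}$ on $\cZ_\rho(\ell_{1,\rho}(\Z;H))$, and by density and boundedness this extends to all of $L_2(S_\rho;H)$. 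I do not expect a genuine obstacle here; the only thing to be careful about is carrying the weight factor $\rho^k$ correctly through the shift and the transform, and justifying the rearrangement of the series, both of which are routine.

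As an alternative that sidesteps any convergence discussion, one can instead test against the orthonormal basis $(p_{k,n})_{k,n\in\Z}$. One checks from the formula in Theorem~\ref{t:ztransform} that $\cZ_\rho^* p_{k,n}=\rho^k\delta_k\psi_n$, that $\tau(\rho^k\delta_k\psi_n)=\rho\cdot\rho^{k-1}\delta_{k-1}\psi_n$, and that $\mathrm{m}\,p_{k,n}=\rho\,p_{k-1,n}$, whence $\cZ_\rho\tau\cZ_\rho^* p_{k,n}=\rho\,p_{k-1,n}=\mathrm{m}\,p_{k,n}$ for all $k,n\in\Z$; since $(p_{k,n})$ is an orthonormal basis and both operators are bounded, they coincide.
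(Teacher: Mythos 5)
The paper states this lemma without proof, importing it from \cite{Siegmund2018}, so there is no in-paper argument to compare against. Your proposal is correct and self-contained: the re-indexing computation $\cZ_\rho(\tau x)(z)=z\,\cZ_\rho(x)(z)$ on $\ell_{1,\rho}(\Z;H)$ is valid (absolute convergence on $S_\rho$ justifies the shift of index), density of $\ell_{1,\rho}(\Z;H)$ in $\ell_{2,\rho}(\Z;H)$ together with boundedness of $\tau$ and $\mathrm{m}$ closes the argument, and your alternative check $\cZ_\rho^*p_{k,n}=\rho^k\delta_k\psi_n$, $\tau(\rho^k\delta_k\psi_n)=\rho\,\rho^{k-1}\delta_{k-1}\psi_n$, $\mathrm{m}\,p_{k,n}=\rho\,p_{k-1,n}$ on the orthonormal basis is also correct. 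Either route is a standard and complete proof of the claim.
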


Next, we present a Paley--Wiener type result for the $\mathcal{Z}$ transform. 

\begin{lemma}[Characterization of positive support \cite{Siegmund2018}]\label{lem:positive-support}
Let $\rho > 0$, $x \in \ell_{2,\rho}(\Z; H)$. Then the following statements are equivalent:

(i) $\spt x \subseteq \N$,

(ii) $z \mapsto \sum_{k \in \Z} x_k z^{-k}$ is analytic on $\C_{\abs{\cdot} > \rho}$ and
\begin{equation}\label{e:Hardybound}
   \sup_{\mu > \rho} \int_{S_\mu} \norm{\sum_{k \in \Z} x_k z^{-k}}_H^2 \,\frac{\mathrm{d}z}{|z|} < \infty.
\end{equation}
\end{lemma}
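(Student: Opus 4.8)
The plan is to prove the two implications separately, using unitarity of the $\cZ$ transform (Theorem~\ref{t:ztransform}) and the embeddings of one-sided weighted sequence spaces (Proposition~\ref{lem:one-sided}). Throughout I read $\int_{S_\mu}g\,\frac{\mathrm{d}z}{|z|}$ as $\int_0^{2\pi}g(\mu e^{i\theta})\,\mathrm{d}\theta$ (consistent with the normalisation making the $p_{k,n}$ orthonormal), so that \eqref{e:Hardybound} is precisely a uniform bound, over $\mu>\rho$, on the $L_2([0,2\pi];H)$-norms of $\theta\mapsto f(\mu e^{i\theta})$.

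For (i)$\Rightarrow$(ii): assuming $\spt x\subseteq\N$, the estimate $\sum_{k\ge0}\norm{x_k}_H^2\mu^{-2k}\le\sum_{k\ge0}\norm{x_k}_H^2\rho^{-2k}$, valid for $\mu\ge\rho$, shows $x\in\ell_{2,\mu}(\Z;H)$ with $\norm{x}_{\ell_{2,\mu}(\Z;H)}\le\norm{x}_{\ell_{2,\rho}(\Z;H)}$, and for $\mu>\rho$ Proposition~\ref{lem:one-sided}(b) (or Cauchy--Schwarz) gives $x\in\ell_{1,\mu}(\Z;H)$. Hence for each $\mu>\rho$ the series $\sum_{k\in\Z}x_kz^{-k}=\sum_{k\ge0}x_kz^{-k}$ converges, by the Weierstrass $M$-test with majorant $(\norm{x_k}_H\mu^{-k})_k$, uniformly on $\C_{\abs{\cdot}\ge\mu}$; being the uniform limit of $H$-valued holomorphic partial sums it is holomorphic on $\C_{\abs{\cdot}>\mu}$, and letting $\mu\downarrow\rho$, on $\C_{\abs{\cdot}>\rho}$. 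On each $S_\mu$ with $\mu>\rho$ this function equals $\cZ_\mu x$ by Theorem~\ref{t:ztransform}, so by unitarity $\int_{S_\mu}\norm{\sum_{k\in\Z}x_kz^{-k}}_H^2\frac{\mathrm{d}z}{|z|}=2\pi\norm{\cZ_\mu x}_{L_2(S_\mu;H)}^2=2\pi\norm{x}_{\ell_{2,\mu}(\Z;H)}^2\le2\pi\norm{x}_{\ell_{2,\rho}(\Z;H)}^2$, which is \eqref{e:Hardybound}.

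For (ii)$\Rightarrow$(i): write $f(z):=\sum_{k\in\Z}x_kz^{-k}$, which by hypothesis defines a holomorphic $H$-valued function on $\C_{\abs{\cdot}>\rho}$. Its negative-index part $f_-(w):=\sum_{k\ge1}x_{-k}w^k$ is then a power series converging for every $|w|>\rho$, hence of infinite radius of convergence, so $f_-$ is entire with $f_-(0)=0$; its nonnegative-index part $f_+(z):=\sum_{k\ge0}x_kz^{-k}$ converges absolutely and locally uniformly on $\C_{\abs{\cdot}>\rho}$ (by the same $M$-test argument, since $x|_{\Z_{\ge0}}\in\ell_{2,\rho}(\Z_{\ge0};H)\subseteq\ell_{1,\mu}(\Z_{\ge0};H)$ for $\mu>\rho$) and extends holomorphically to $\infty$ with $f_+(\infty)=x_0$, and $f=f_++f_-$. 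Fix $\mu>\rho$; both series converge uniformly on $S_\mu$, so $\theta\mapsto f(\mu e^{i\theta})$ has Fourier series $\sum_{k\ge1}x_{-k}\mu^ke^{ik\theta}+\sum_{k\ge0}x_k\mu^{-k}e^{-ik\theta}$, in which the two families of exponentials are orthogonal in $L_2([0,2\pi])$. Parseval in $L_2([0,2\pi];H)$ hence yields
\[
 \int_{S_\mu}\norm{f(z)}_H^2\,\frac{\mathrm{d}z}{|z|}=2\pi\sum_{k\ge1}\norm{x_{-k}}_H^2\mu^{2k}+2\pi\sum_{k\ge0}\norm{x_k}_H^2\mu^{-2k}.
\]
Since the left-hand side stays bounded as $\mu\to\infty$ by \eqref{e:Hardybound} while every summand is nonnegative and $\mu^{2k}\to\infty$ for each $k\ge1$, all $x_{-k}$ with $k\ge1$ vanish, i.e.\ $\spt x\subseteq\N$.

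The main obstacle is the direction (ii)$\Rightarrow$(i): one has to interpret the two-sided series correctly, recognise its negative-index part as a power series that the hypothesis forces to be entire, and then combine the circle-wise Parseval identity with the growth $\mu^{2k}\to\infty$ to kill the negative coefficients. The converse direction is a routine application of unitarity of $\cZ_\mu$ together with the nesting $\ell_{2,\rho}(\Z_{\ge0};H)\subseteq\ell_{1,\mu}(\Z_{\ge0};H)\cap\ell_{2,\mu}(\Z;H)$ for $\mu>\rho$.
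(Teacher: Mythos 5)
The paper does not prove this lemma; it is quoted from \cite{Siegmund2018} without proof, so there is nothing in the source to compare against line by line. Your argument is correct and is the standard discrete Paley--Wiener proof: the forward direction follows from $x|_{\Z_{\geq 0}}\in\ell_{1,\mu}\cap\ell_{2,\mu}$ for $\mu>\rho$ plus unitarity of $\cZ_\mu$, and the converse from the circle-wise Parseval identity
\begin{equation*}
\frac{1}{2\pi}\int_{S_\mu}\norm{f(z)}_H^2\,\frac{\mathrm{d}z}{\abs{z}}=\sum_{k\geq 1}\norm{x_{-k}}_H^2\mu^{2k}+\sum_{k\geq 0}\norm{x_k}_H^2\mu^{-2k},
\end{equation*}
whose uniform boundedness as $\mu\to\infty$ forces $x_{-k}=0$ for $k\geq 1$; the one point worth keeping explicit, which you do handle, is that convergence of the two-sided series on all of $\C_{\abs{\cdot}>\rho}$ together with absolute convergence of the nonnegative-index part makes the negative-index part an entire power series, justifying termwise integration on each $S_\mu$.
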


\begin{definition}[Causal linear operator]\label{d:linearcausal}
    We call a linear operator $B  \colon \ell_{2,\rho}(\Z; H) \rightarrow \ell_{2,\rho}(\Z; H)$ \emph{causal}, if for all $a\in \Z$, $f\in \ell_{2,\rho}(\Z;H)$, we have
    \begin{equation*}
        \spt f \subseteq \mathbb{Z}_{\geq a} 
        \quad\Rightarrow\quad
        \spt Bf \subseteq\mathbb{Z}_{\geq a}.
    \end{equation*}
\end{definition}

Recall \cite[VIII.3.6, p.\ 222]{Katznelson2004} that for $A \in L(H)$ with spectrum $\sigma(A)$, the spectral radius
\begin{equation*}
   r(A)
   \coloneqq
   \sup \setm{|z|}{z \in \sigma(A)}
\end{equation*}
of $A$ satisfies
\begin{equation*}
   r(A)
   =
   \lim_{n \to \infty} \norm{A^n}_{L(H)}^{1/n}.
\end{equation*}
Let $A\in L(H)$ and $\rho > 0$.
We denote the operators $\ell_{2,\rho}(\Z, H)\rightarrow \ell_{2,\rho}(\Z, H)$, $x\mapsto (Ax_k)$,
        and $L_2(S_\rho, H)\rightarrow L_2(S_\rho, H)$, $f\mapsto (z \mapsto Af(z))$,
        which have the same operator norm as $A$, again by $A$.

\begin{proposition}[Convolution]
Let $c \in \ell_{1, \rho}(\Z; \C)$ and $u \in \ell_{2, \rho}(\Z; H)$.
Then
\begin{equation*}
	c * u \coloneqq \left(\sum_{k = -\infty}^\infty c_k u_{n - k}\right)_{n \in \Z} \in \ell_{2, \rho}(\Z; H).
\end{equation*}
We have Young's inequality
\begin{equation*}
	\norm{c * u}_{\ell_{2, \rho}(\Z; H)} \leq \norm{c}_{\ell_{1, \rho}(\Z; \C)} \norm{u}_{\ell_{2, \rho}(\Z; H)}.
\end{equation*}
Moreover,
\begin{equation*}
	\cZ_\rho(c * u) = \cZ_\rho c \cZ_\rho u.
\end{equation*}
\end{proposition}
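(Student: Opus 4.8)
The plan is to treat the three assertions separately: the inclusion $c*u\in\ell_{2,\rho}(\Z;H)$ and Young's inequality amount to the weighted $\ell_1$--$\ell_2$ Young estimate, which I would prove by a direct Cauchy--Schwarz/Tonelli argument; the identity $\cZ_\rho(c*u)=\cZ_\rho c\,\cZ_\rho u$ I would obtain by a density argument, because in general $c*u\notin\ell_{1,\rho}(\Z;H)$ and hence $\cZ_\rho(c*u)$ cannot be expanded termwise on $S_\rho$ (cf.\ Remark~\ref{rem:z-transform}).

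For Young's inequality I would first show that for each fixed $n\in\Z$ the series $\sum_{k\in\Z}c_ku_{n-k}$ converges absolutely in $H$. Writing $\rho^{-n}\norm{c_ku_{n-k}}_H=(|c_k|\rho^{-k})^{1/2}\bigl((|c_k|\rho^{-k})^{1/2}\norm{u_{n-k}}_H\rho^{-(n-k)}\bigr)$ and applying the Cauchy--Schwarz inequality in $\ell_2(\Z)$ bounds $\sum_k\rho^{-n}\norm{c_ku_{n-k}}_H$ by $\norm{c}_{\ell_{1,\rho}(\Z;\C)}^{1/2}\bigl(\sum_k|c_k|\rho^{-k}\norm{u_{n-k}}_H^2\rho^{-2(n-k)}\bigr)^{1/2}$, and the last factor is finite since $\norm{u_m}_H^2\rho^{-2m}\le\norm{u}_{\ell_{2,\rho}(\Z;H)}^2$ for every $m$; completeness of $H$ gives convergence in $H$, so $c*u\in H^\Z$ is well defined. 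Squaring the same estimate yields $\rho^{-2n}\norm{(c*u)_n}_H^2\le\norm{c}_{\ell_{1,\rho}(\Z;\C)}\sum_{k\in\Z}|c_k|\rho^{-k}\norm{u_{n-k}}_H^2\rho^{-2(n-k)}$. Summing over $n\in\Z$ and interchanging the two sums (Tonelli, all terms nonnegative), the inner sum over $n$ equals $\norm{u}_{\ell_{2,\rho}(\Z;H)}^2$ for each $k$ and the remaining sum over $k$ equals $\norm{c}_{\ell_{1,\rho}(\Z;\C)}$, so $\norm{c*u}_{\ell_{2,\rho}(\Z;H)}^2\le\norm{c}_{\ell_{1,\rho}(\Z;\C)}^2\norm{u}_{\ell_{2,\rho}(\Z;H)}^2$.

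For the transform identity, note that $c\in\ell_{1,\rho}(\Z;\C)\subseteq\ell_{2,\rho}(\Z;\C)$, so by Theorem~\ref{t:ztransform} $\cZ_\rho c$ is represented by $z\mapsto\sum_kc_kz^{-k}$, which on $S_\rho$ is bounded by $\sum_k|c_k|\rho^{-k}=\norm{c}_{\ell_{1,\rho}(\Z;\C)}$; hence multiplication $f\mapsto\cZ_\rho c\cdot f$ is bounded on $L_2(S_\rho;H)$. Consequently both $u\mapsto\cZ_\rho(c*u)$ and $u\mapsto\cZ_\rho c\,\cZ_\rho u$ are bounded linear operators $\ell_{2,\rho}(\Z;H)\to L_2(S_\rho;H)$ (using Young's inequality together with the fact that $\cZ_\rho$ is unitary), so it suffices to check equality on the dense subspace spanned by the $\delta_m h$, $m\in\Z$, $h\in H$ (density being obtained by truncation). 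For such a generator $(c*\delta_m h)_n=c_{n-m}h$, so $c*\delta_m h\in\ell_{1,\rho}(\Z;H)$, and Theorem~\ref{t:ztransform} gives $\cZ_\rho(c*\delta_m h)(z)=\sum_nc_{n-m}hz^{-n}=\bigl(\sum_jc_jz^{-j}\bigr)hz^{-m}=\cZ_\rho c(z)\,\cZ_\rho(\delta_m h)(z)$ after the shift $n=j+m$. Extending by linearity and continuity completes the argument.

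The main obstacle is precisely the identity for $\cZ_\rho$: the naive manipulation $\cZ_\rho(c*u)(z)=\sum_n\sum_kc_ku_{n-k}z^{-n}=\sum_kc_kz^{-k}\sum_ju_jz^{-j}$ is not justified because $u$, and a fortiori $c*u$, need not lie in $\ell_{1,\rho}(\Z;H)$, so the double series has no pointwise meaning on $S_\rho$. The density--continuity scheme above circumvents this, with the only slightly delicate points being the boundedness of the multiplication operator $f\mapsto\cZ_\rho c\cdot f$ on $L_2(S_\rho;H)$ and the density of $\operatorname{span}\setm{\delta_m h}{m\in\Z,\ h\in H}$ in $\ell_{2,\rho}(\Z;H)$, both of which are routine.
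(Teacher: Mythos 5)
Your proof is correct and follows essentially the same route as the paper: the same Cauchy--Schwarz/Tonelli computation for Young's inequality, and a density--continuity argument for the transform identity $\cZ_\rho(c*u)=\cZ_\rho c\,\cZ_\rho u$. The only difference is the choice of dense subspace: the paper works on $\ell_{1,\rho}(\Z;H)\cap\ell_{2,\rho}(\Z;H)$, first checking $c*u\in\ell_{1,\rho}(\Z;H)$ there and then justifying the termwise rearrangement by Fubini, whereas you reduce further to finite linear combinations of $\delta_m h$, which makes the pointwise verification trivial but is otherwise the same idea.
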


\begin{proof}
Let $n \in \Z$.
With the Cauchy-Schwarz inequality we compute
\begin{align*}
	\left(\sum_{k = -\infty}^\infty \norm{c_k u_{n - k}}_H \right)^2 \rho^{-2n}
		&= \left(\sum_{k = -\infty}^\infty |c_k|^{1/2} \rho^{-k/2} |c_k|^{1/2} \rho^{-k/2} \norm{u_{n - k}}_H \rho^{-(n - k)}\right)^2
	\\
	&\leq \norm{c}_{\ell_{1, \rho}(\Z; \C)} \left(\sum_{k = -\infty}^\infty |c_k| \rho^{-k} \norm{u_{n - k}}_H^2 \rho^{-2(n - k)}\right).
\end{align*}
Therefore using Fubini's theorem
\begin{align*}
	\sum_{n = -\infty}^\infty \norm{(c * u)_n}_H^2 \rho^{-2n}
		&\leq \sum_{n = -\infty}^\infty \left(\sum_{k = -\infty}^\infty \norm{c_k u_{n - k}}_H\right)^2 \rho^{-2n}
	\\
	&\leq \norm{c}_{\ell_{1, \rho}(\Z; \C)}
		\sum_{n = -\infty}^\infty \left(\sum_{k = -\infty}^\infty |c_k| \rho^{-k} \norm{u_{n - k}}_H^2 \rho^{-2(n - k)}\right)
	\\
	&= \norm{c}_{\ell_{1, \rho}(\Z; \C)}^2 \norm{u}_{\ell_{2, \rho}(\Z; H)}^2.
\end{align*}
This shows Young's inequality.
If additionally $u \in \ell_{1, \rho}(\Z; H)$ then
\begin{align*}
	\sum_{n = -\infty}^\infty \norm{(c * u)_n}_H \rho^{-n} &\leq \sum_{n = -\infty}^\infty \sum_{k = -\infty}^\infty \norm{c_k u_{n - k}}_H \rho^{-n}
	\\
	&= \sum_{k = -\infty}^\infty \sum_{n = -\infty}^\infty |c_k| \rho^{-k} \norm{u_{n - k}}_H \rho^{-(n-k)}
	\\
	&= \norm{c}_{\ell_{1, \rho}(\Z; \C)} \norm{u}_{\ell_{1, \rho}(\Z; H)},
\end{align*}
i.e., $c * u \in \ell_{1, \rho}(\Z; H) \cap \ell_{2, \rho}(\Z; H)$ which simplifies the $\cZ$ transform of $c * u$.
Using Fubini's theorem, we compute for $u \in \ell_{1, \rho}(\Z; H) \cap \ell_{2, \rho}(\Z; H)$ and $z \in S_\rho$
\begin{align*}
	\cZ_\rho(c * u)(z) &= \sum_{n = -\infty}^\infty \left(\sum_{k = -\infty}^\infty c_{k} u_{n - k}\right) z^{-n} = \sum_{n = -\infty}^\infty \left(\sum_{k = -\infty}^\infty c_{k} z^{-k} u_{n - k} z^{-(n - k)}\right)
	\\
        &= \sum_{k = -\infty}^\infty c_{k} z^{-k} \left(\sum_{n = -\infty}^\infty  u_{n - k} z^{-(n - k)}\right) = \cZ_\rho(c) \cZ_\rho(u).
\end{align*}
For $u \in \ell_{2, \rho}(\Z; H)$ the formula follows by density of $\ell_{1, \rho}(\Z; H) \cap \ell_{2, \rho}(\Z; H) \subseteq \ell_{2, \rho}(\Z; H)$.
\end{proof}

\begin{example}[The operator $(1 - \tau^{-1})^\alpha$]
Let $\rho > 1$ and $\alpha \in \C$.
For the operator $1 - \tau^{-1}: \ell_{2, \rho}(\Z; H) \rightarrow \ell_{2, \rho}(\Z; H)$, we compute
\begin{equation*}
    (1 - \tau^{-1}) = \cZ_\rho^* (1 - z^{-1}) \cZ_\rho.
\end{equation*}
We have $|z^{-1}| < 1$ for all $z \in S_\rho$ and therefore
\begin{equation*}
    (1 - \tau^{-1})^{\alpha} \coloneqq \cZ_\rho^* (1 - z^{-1})^{\alpha} \cZ_\rho : \ell_{2, \rho}(\Z; H) \rightarrow \ell_{2, \rho}(\Z; H)
\end{equation*}
is well-defined.
This is an application of the holomorphic functional calculus (cf.\ \cite[pp.~13--18]{Gohberg:Goldberg:Kaashoek1990}, \cite[p.~601]{Dunford1988}).

We define $c \in \ell_{1, \rho}(\Z; \C)$ by
\begin{equation*}
    c_k \coloneqq
    \begin{cases}
        (-1)^k \binom{\alpha}{k} & \text{if } k \geq 0,
        \\
        0 & \text{if } k < 0.
    \end{cases}
\end{equation*}
Then
\begin{equation*}
    \cZ_\rho c = \sum_{k = 0}^\infty (-1)^k \binom{\alpha}{k} z^{-k} = (1 - z^{-1})^\alpha.
\end{equation*}
Thus we compute for $u \in \ell_{2, \rho}(\Z; H)$
\begin{equation*}
    \cZ_\rho(c * u) = \cZ_\rho c \cZ_\rho u = (1 - z^{-1})^\alpha \cZ_\rho u.
\end{equation*}
Thus for $\alpha \in \C$ and $u \in \ell_{2, \rho}(\Z; H)$ we obtain
\begin{equation*}
    (1 - \tau^{-1})^\alpha u = c * u = \left(\sum_{k = 0}^\infty (-1)^k \binom{\alpha}{k} u_{n - k}\right)_{n \in \Z}
		= \left(\sum_{k = -\infty}^n (-1)^{n - k} \binom{\alpha}{n - k} u_k\right)_{n \in \Z},
\end{equation*}
i.e., $(1 - \tau^{-1})^\alpha$ is a convolution operator
	and by Young's Theorem $(1 - \tau^{-1})^\alpha$ is bounded and $\norm{(1 - \tau^{-1})^\alpha}_{L(\ell_{2, \rho}(\Z; H))} = \norm{c}_{\ell_{1, \rho}(\Z; H)}$.

If $u \in \ell_{2, \rho}(\Z; H)$ with $\spt u \subseteq \N$, we have
\begin{equation*}
    (1 - \tau^{-1})^{\alpha} u = \left(\sum_{k = 0}^n (-1)^k \binom{\alpha}{k} u_{n - k}\right)_{n \in \Z}.
\end{equation*}
Since $\tau$ commutes with $(1 - \tau^{-1})^{\alpha}$, we deduce that $(1 - \tau^{-1})^\alpha$ is causal.

On $\ell_{2, \rho}(\Z; H)$ we compute for $\alpha, \beta \in \C$
\begin{align*}
    (1 - \tau^{-1})^\alpha (1 - \tau^{-1})^\beta &= \cZ_\rho^* (1 - z^{-1})^\alpha \cZ_\rho \cZ_\rho^* (1 - z^{-1})^\beta \cZ_\rho
	\\
	&= \cZ_\rho^* (1 - z^{-1})^{\alpha + \beta}\cZ_\rho = (1 - \tau^{-1})^{\alpha + \beta}.
\end{align*}
In particular, for $\alpha \in \C$, $(1 - \tau^{-1})^\alpha$ is invertible with inverse $(1 - \tau^{-1})^{-\alpha}$.
\end{example}

\section{Fractional difference equations on $\ell_{2, \rho}(\Z; H)$}

\subsection*{Fractional operators}

Let $\rho > 1$ and $\alpha \in (0, 1)$.
We consider the operators \eqref{e:FI-op}, \eqref{e:RL-op} and \eqref{e:C-op} defined on $V = H$.
For comparing operators defined on spaces of sequences on $\Z$ with those defined for sequences on $\N$, we recall the embedding of $\ell_{2, \rho}(\N; H)$ into $\ell_{2, \rho}(\Z; H)$ by $\iota$ in Lemma \ref{lem:one-sided}.
Moreover, we extend the operator $\Delta$ on $\N$ to $\Z$ by
\begin{equation*}
	\Delta: \ell_{2, \rho}(\Z; H) \rightarrow \ell_{2, \rho}(\Z; H), \qquad u \mapsto \chi_{\N} (\tau - 1)u =
		\chi_{\N} \tau (1 - \tau^{-1})u.
\end{equation*}
Note that the left shift on $\N$ cuts of the first value of a sequence and embedded sequences have positive support.
This is the reason for multiplying with $\chi_{\N}$ in the definition of $\Delta$ on $\ell_{2, \rho}(\Z; H)$.

Let $v \in \ell_{2, \rho}(\N; H)$ and set $u \coloneqq \iota v \in \ell_{2, \rho}(\Z; H)$.
We compare the operator $(1 - \tau^{-1})^{-\alpha}$ defined on $\ell_{2, \rho}(\Z; H)$ and the fractional sum \eqref{e:FI-op}.
We have $\spt \left((1 - \tau^{-1})^{-\alpha} u\right) \subseteq \N$ and obtain
\begin{equation*}
    \iota \nabla^{-\alpha} v = (1 - \tau^{-1})^{-\alpha} u.
\end{equation*}

Using definitions \eqref{e:RL-op} and \eqref{e:C-op} of the Riemann-Liouville and Caputo difference operators, and the fact that $\Delta u = (\tau - 1) (u - \chi_{\N} u_0) = \tau (1 - \tau^{-1}) (u - \chi_{\N} u_0)$, we compute
\begin{align*}
    &\Delta (1 - \tau^{-1})^{-(1 - \alpha)} u = \chi_{\N} \tau (1 - \tau^{-1})^\alpha u
        = \tau (1 - \tau^{-1})^\alpha u - \delta_{-1} u_0,
    \\
    &(1 - \tau^{-1})^{\alpha - 1} \Delta u = (1 - \tau^{-1})^{\alpha - 1} \chi_{\N} \tau (1 - \tau^{-1}) u = \tau (1 - \tau^{-1})^\alpha (u - \chi_{\N} u_0).
\end{align*}
Moreover, we have
\begin{align*}
    &\iota \Delta^\alpha v = \chi_{\N} \tau (1 - \tau^{-1})^{\alpha} u,
    \\
    &\iota \Delta_C^\alpha v = \tau (1 - \tau^{-1})^\alpha (u - \chi_{\N} u_0).
\end{align*}
In view of $\tau (1 - \tau^{-1})^\alpha$, the Caputo and the Riemann-Liouville operators are equal whereby the Caputo operator regularizes $u$ first.
In particular for $n \in \N$ by Proposition \ref{p:binom} we have $((1 - \tau^{-1})^\alpha \chi_\N u_0)_n = \sum_{k = 0}^n (-1)^k \binom{\alpha}{k} u_0 = \binom{-\alpha + n}{n} u_0$ and so
\begin{equation*}
    (\Delta^\alpha v)_n = (\Delta_C^\alpha v)_n + \binom{-\alpha + n + 1}{n + 1} u_0.
\end{equation*}
It is notable that the operator $(1 - \tau^{-1})^\alpha$ defined on $\C$ maps real valued sequences to real valued sequences.
We could have started with a real Hilbert space $H$ and analyze $(1 - \tau^{-1})^\alpha$ spectral-wise by the complexification $H \oplus H$.

\begin{proposition}[Equivalence of difference equation and sequence equation]\label{p:equ-frac-de-seq-eq}
Let $\rho > 1$ and $\alpha \in (0, 1)$.
Let $x \in H$, $F: \ell_{2, \rho}(\Z; H) \rightarrow \ell_{2, \rho}(\Z; H)$ and $u \in \ell_{2, \rho}(\Z; H)$.
Let $\spt u \subseteq \N$ and $\spt F(u) \subseteq \N$.
In view of the Riemann-Liouville operator, the following are equivalent:
\begin{align*}
    &(i) &&\tau (1 - \tau^{-1})^\alpha u = F(u) + \delta_{-1} x,
	\\
    &(ii) &&u_0 = x, ((1 - \tau^{-1})^\alpha u)_{n + 1} = F(u)_n \text{ for } n \in \N,
	\\
    &(iii) &&u_0 = x, u_{n + 1} =
		(-1)^{n + 1} \binom{-\alpha}{n + 1} u_0 + \sum_{k = 0}^n (-1)^{n - k} \binom{-\alpha}{n - k} F(u)_k \text{ for } n \in \N.
\end{align*}
In view of the Caputo operator, the following are equivalent:
\begin{align*}
    &(iv) &&\tau (1 - \tau^{-1})^\alpha u = F(u) + (1 - \tau^{-1})^\alpha \chi_{\Z_{\geq -1}} x,
	\\
    &(v) &&u_0 = x, ((1 - \tau^{-1})^\alpha u)_{n + 1} = F(u)_n + (-1)^{n + 1} \binom{\alpha - 1}{n + 1} u_0 \text{ for } n \in \N,
	\\
    &(vi) &&u_0 = x, u_{n + 1} = u_0 + \sum_{k = 0}^n (-1)^{n - k} \binom{-\alpha}{n - k} F(u)_k \text{ for } n \in \N.
\end{align*}
\end{proposition}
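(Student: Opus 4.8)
The plan is to reduce everything to entrywise comparison of sequences, using two facts established above: first, from the Example on $(1-\tau^{-1})^\alpha$, for $\beta\in\C$ the operator $(1-\tau^{-1})^\beta$ on $\ell_{2,\rho}(\Z;H)$ is a convolution operator with kernel supported on $\N$, it commutes with $\tau$, and it is inverted by $(1-\tau^{-1})^{-\beta}$; in particular, if $\spt w\subseteq\N$ then $\spt\big((1-\tau^{-1})^\beta w\big)\subseteq\N$, $\big((1-\tau^{-1})^\beta w\big)_0=w_0$, and $\big((1-\tau^{-1})^\beta w\big)_n=\sum_{k=0}^n(-1)^k\binom{\beta}{k}w_{n-k}$ for $n\in\N$; second, Proposition \ref{p:binom}, which gives $\sum_{k=0}^n(-1)^k\binom{\alpha}{k}=(-1)^n\binom{\alpha-1}{n}$, so that $\big((1-\tau^{-1})^\alpha\chi_\N y\big)_n=(-1)^n\binom{\alpha-1}{n}y$ for $n\in\N$ and $y\in H$. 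Throughout, $F(u)$ is a fixed element of $\ell_{2,\rho}(\Z;H)$, so $(i)$--$(vi)$ are statements about the given $u$ and there is no fixed-point issue to address.

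For the Riemann--Liouville part I would first show $(i)\Leftrightarrow(ii)$ by comparing the two sides of $(i)$ index by index: since $(\tau v)_n=v_{n+1}$, since $\spt\delta_{-1}x=\{-1\}$ and $\spt F(u)\subseteq\N$, and since $\spt\big((1-\tau^{-1})^\alpha u\big)\subseteq\N$ with zeroth entry $u_0$, the components of $(i)$ with $n\le-2$ hold automatically, the component $n=-1$ reads $u_0=x$, and the components $n\in\N$ read $\big((1-\tau^{-1})^\alpha u\big)_{n+1}=F(u)_n$; this is exactly $(ii)$. Then I would show $(ii)\Leftrightarrow(iii)$: using $u_0=x$ and $F(u)_{-1}=0$, condition $(ii)$ is equivalent to $u_0=x$ together with the single identity $(1-\tau^{-1})^\alpha u=\delta_0 x+\tau^{-1}F(u)$; applying the bijection $(1-\tau^{-1})^{-\alpha}$, which commutes with $\tau^{-1}$, turns this into $u=(1-\tau^{-1})^{-\alpha}\delta_0 x+\tau^{-1}(1-\tau^{-1})^{-\alpha}F(u)$ with $u_0=x$, and reading off the $(n+1)$-st entry of the right-hand side via the convolution formula for the kernel $k\mapsto(-1)^k\binom{-\alpha}{k}$ and then reindexing $k\mapsto n-k$ in the sum gives $(iii)$.

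The Caputo part runs along the same lines starting from $(iv)$, the one new computation being the forcing term. I would observe that $\chi_{\Z_{\geq-1}}x=\tau\chi_\N x$, hence $(1-\tau^{-1})^\alpha\chi_{\Z_{\geq-1}}x=\tau(1-\tau^{-1})^\alpha\chi_\N x$, whose $n$-th entry equals $(-1)^{n+1}\binom{\alpha-1}{n+1}x$ for $n\ge-1$ and $0$ for $n\le-2$ by the second fact above; comparing $(iv)$ entrywise then gives $u_0=x$ at $n=-1$ (since $\binom{\alpha-1}{0}=1$) and $\big((1-\tau^{-1})^\alpha u\big)_{n+1}=F(u)_n+(-1)^{n+1}\binom{\alpha-1}{n+1}u_0$ for $n\in\N$, i.e.\ $(v)$. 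Finally for $(v)\Leftrightarrow(vi)$: using $\big((1-\tau^{-1})^\alpha\chi_\N u_0\big)_n=(-1)^n\binom{\alpha-1}{n}u_0$ (whose $n=0$ entry is $u_0$), condition $(v)$ is equivalent to $u_0=x$ together with $(1-\tau^{-1})^\alpha u=\tau^{-1}F(u)+(1-\tau^{-1})^\alpha\chi_\N u_0$; applying $(1-\tau^{-1})^{-\alpha}$ and using commutation with $\tau^{-1}$ gives $u=\tau^{-1}(1-\tau^{-1})^{-\alpha}F(u)+\chi_\N u_0$, and reading off the $(n+1)$-st entry and reindexing the binomial sum yields $(vi)$.

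I expect the only delicate point to be the entrywise bookkeeping at the boundary index $n=-1$, where the left shift $\tau$, the point mass $\delta_{-1}$, and the cut-offs $\chi_\N$ and $\chi_{\Z_{\geq-1}}$ interact; in particular, correctly identifying the Caputo forcing term $(1-\tau^{-1})^\alpha\chi_{\Z_{\geq-1}}x$ through Proposition \ref{p:binom} is the step where a sign or index slip is most likely. Everything else — the invertibility and commutation properties of $(1-\tau^{-1})^\alpha$, the convolution formula, and the reindexing of the binomial sums — is routine and already available from the Example.
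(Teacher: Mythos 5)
Your proof is correct and follows essentially the same route as the paper: entrywise comparison at the boundary index for $(i)\Leftrightarrow(ii)$, and application of the inverse $(1-\tau^{-1})^{-\alpha}$ together with the convolution formula and Proposition~\ref{p:binom} to reach the explicit recurrences. The only difference is that you carry out the Caputo half $(iv)$--$(vi)$ in detail, whereas the paper explicitly proves only the equivalence of $(i)$, $(ii)$ and $(iii)$ and leaves the Caputo case to the reader.
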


\begin{proof}
We only proof the equivalence of $(i), (ii)$ and $(iii)$.
\\[1ex]
$(i) \Leftrightarrow (ii)$:
If we evaluate $(i)$ at $n \in \Z$ we obtain
\begin{equation*}
    (\tau (1 - \tau^{-1})^\alpha u)_n = ((1 - \tau^{-1})^\alpha u)_{n + 1} = F(u)_n + (\delta_{-1} x)_n.
\end{equation*}
Since $((1 - \tau^{-1})^\alpha u)_n$ and $F(u)_n = 0$ for $n \in \Z_{< 0}$, and since $(\delta_{-1} x)_n = x$ if and only if $n = -1$ and $((1 - \tau^{-1})^\alpha u)_0 = u_0$,
    it follows that $(i)$ and $(ii)$ are equivalent.
\\[1ex]
$(i) \Leftrightarrow (iii)$:
If we apply $(1 - \tau^{-1})^{-\alpha}$ to $(i)$ we see that $(i)$ is equivalent to
\begin{equation*}
	\tau u = (1 - \tau^{-1})^{-\alpha} F(u) + (1 - \tau^{-1})^{-\alpha} \delta_{-1} u.
\end{equation*}
This equation is equivalent to $(iii)$, since
\begin{equation*}
	(1 - \tau^{-1})^{-\alpha} \delta_{-1} x =
	\begin{cases}
		0, &\text{if} \; n < -1,
		\\
		 (-1)^{n + 1} \binom{-\alpha}{n + 1} x, &\text{if} \; n \geq -1,
	\end{cases}
\end{equation*}
and since $\spt F(u) \subseteq \N$,
\begin{equation*}
	(1 - \tau^{-1})^{-\alpha} F(u) 
	= 
	\sum_{k = 0}^n (-1)^{n - k} \binom{-\alpha}{n - k} F(u)_k.
	\qedhere
\end{equation*}
\end{proof}

\begin{remark}
Note that the right hand side $F$ in Proposition \ref{p:equ-frac-de-seq-eq}$(i), (iv)$ maps sequences instead of values of $H$.
If we have a function $f: H \rightarrow H$ such that for $u \in \ell_{2, \rho}(\Z; H)$ we have $(f(u_n))_{n \in \Z} \in \ell_{2, \rho}(\Z; H)$ we may set $F(u) \coloneqq (f(u_n))_{n \in \Z}$
    in Proposition \ref{p:equ-frac-de-seq-eq}.
\end{remark}

\begin{remark}[Gr\"unwald-Letnikov difference operator]
The Gr\"unwald-Letnikov difference operator is defined for $h > 0$ and $\alpha \in (0, 1)$ by (c.f.\ \cite[p.\ 708]{lubich1986}):
\begin{equation}\label{e:GL-op}
    \tilde{\Delta}_h^\alpha: V^{h \N} \rightarrow V^{h \N}, \qquad u \mapsto \left(t \mapsto \frac{1}{h^\alpha} \sum_{k = 0}^{t/h} (-1)^k \binom{\alpha}{k} u_{t - kh}\right),
\end{equation}
where $h \N = \setm{h n}{n \in \N}$.
It can be shown (cf.\ \cite[p.\ 708]{lubich1986}, \cite[p.\ 43]{podlubny1999}) that for $V = \R$ the Gr\"unwald-Letnikov operator
	can be used to approximate the Riemann-Liouville integral of sufficiently smooth functions.

Let $\alpha \in (0, 1)$.
For $v \in \ell_{2, \rho}(\N; H)$ and $u \coloneqq \iota v$ we calculate for the Gr\"unwald-Letnikov operator \eqref{e:GL-op}, $(1 - \tau^{-1})^\alpha u = \tilde{\Delta}_1^\alpha v$.
Let $h > 0$, $x \in H$ and $F: H \rightarrow H$.
A Gr\"unwald-Letnikov difference equation has the form
\begin{equation*}
	(\tilde{\Delta}_h^\alpha v)(t + h) = F(v(t)), \quad v(0) = x \qquad (t \in h \N).
\end{equation*}
For $h = 1$ the Grünwald-Letnikov equation resembles the Riemann-Liouville equation of Proposition \ref{p:equ-frac-de-seq-eq}
    and for $h \in \R_{> 0}$ we may treat a Grünwald-Letnikov problem by considering the problem
\begin{equation*}
    \tau (1 - \tau^{-1})^\alpha u = h^\alpha F(u) + \delta_{-1} x.
\end{equation*}
\end{remark}

\subsection*{Linear equations on sequence spaces}

\begin{remark}
Let $A \in L(H)$ and $x \in H$.
In view of the Riemann-Liouville difference operator we ask whether the linear equation 
\begin{equation}\label{e:lin-RL}
    \tau (1 - \tau^{-1})^\alpha u = A u + \delta_{-1} x
\end{equation}
of Proposition \ref{p:equ-frac-de-seq-eq} has a unique so-called causal solution that is supported in $\N$.
In the spaces $\ell_{2, \rho}(\Z; H)$ we have a unique solution of \eqref{e:lin-RL} for every initial value if $\tau (1 - \tau^{-1})^{\alpha} - A$ is invertible in $\ell_{2, \rho}(\Z; H)$.
In view of Proposition \ref{p:equ-frac-de-seq-eq} the solution $(\tau (1 - \tau^{-1})^\alpha - A)^{-1} \delta_{-1} x$ should be causal.
For the corresponding Caputo equation
\begin{equation}\label{e:lin-C}
    \tau (1 - \tau^{-1})^\alpha u = A u + (1 - \tau^{-1})^\alpha \chi_{\Z_{\geq -1}} x,
\end{equation}
the treatment is similar since $\chi_{\Z_{\geq -1}} x = \chi_{\N} x + \delta_{-1} x$.
\end{remark}

\begin{lemma}\label{lem:invertible}
Let $\alpha \in (0, 1)$ and $A \in L(H)$.
We define $f: \C_{\abs{\cdot} > 1} \rightarrow \C, z \mapsto z (1 - z^{-1})^\alpha$ and set $f_\rho \coloneqq f|_{S_\rho}$ for $\rho > 1$.
For $\rho > 1$ the operator $\tau (1 - \tau^{-1})^\alpha - A$ is invertible in $\ell_{2, \rho}(\Z; H)$ if and only if $\ran f_\rho \cap \sigma(A) = \emptyset$.
Moreover there is $\rho > 1$ such that for all $\mu > \rho$, $\ran f_\mu \cap \sigma(A) = \emptyset$,
	that is $\setm{z (1 - z^{-1})^\alpha}{\abs{z} > \rho}$ is in the resolvent set of $A$.
\end{lemma}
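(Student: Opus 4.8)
The plan is to pass to the ``frequency side'' via the $\cZ$ transform. Combining Lemma~\ref{lem:mo}, the identity $(1 - \tau^{-1})^\alpha = \cZ_\rho^* (1 - z^{-1})^\alpha \cZ_\rho$ from the Example, and the fact that the pointwise action of $A$ commutes with $\cZ_\rho$, one obtains
\begin{equation*}
   \cZ_\rho\bigl(\tau(1 - \tau^{-1})^\alpha - A\bigr)\cZ_\rho^* = M, \qquad (Mg)(z) = \bigl(f_\rho(z) - A\bigr)g(z) \quad (z \in S_\rho),
\end{equation*}
the operator of multiplication on $L_2(S_\rho; H)$ by the $L(H)$-valued function $z \mapsto f_\rho(z) - A$; here $f_\rho$ is continuous on $S_\rho$ because $z \mapsto (1 - z^{-1})^\alpha$ is holomorphic on $\C_{\abs{\cdot} > 1}$. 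Since $\cZ_\rho$ is unitary, $\tau(1 - \tau^{-1})^\alpha - A$ is invertible in $\ell_{2, \rho}(\Z; H)$ if and only if $M$ is invertible, so it suffices to characterise invertibility of $M$.

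For the implication ``$\ran f_\rho \cap \sigma(A) = \emptyset \Rightarrow M$ invertible'', note that then $f_\rho(z)$ lies in the resolvent set of $A$ for every $z \in S_\rho$, so $z \mapsto (f_\rho(z) - A)^{-1}$ is well-defined; it is continuous (the resolvent map is norm-continuous on the resolvent set and $f_\rho$ is continuous), hence bounded on the compact circle $S_\rho$, say by $C \coloneqq \sup_{z \in S_\rho}\norm{(f_\rho(z) - A)^{-1}}_{L(H)} < \infty$. Then $g \mapsto \bigl(z \mapsto (f_\rho(z) - A)^{-1} g(z)\bigr)$ is a bounded operator on $L_2(S_\rho; H)$ which is a two-sided inverse of $M$.

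For the converse I argue by contraposition: suppose $\lambda_0 \coloneqq f_\rho(z_0) \in \sigma(A)$ for some $z_0 \in S_\rho$. Then $\lambda_0 - A$ is not invertible, so (were it bounded below and surjective it would be invertible) either it fails to be bounded below, giving unit vectors $x_n \in H$ with $\norm{(\lambda_0 - A)x_n}_H \to 0$, or it is bounded below but not surjective, in which case $\ran(\lambda_0 - A)$ is a proper closed subspace and any unit vector $y_0$ orthogonal to it satisfies $(\overline{\lambda_0} - A^*)y_0 = 0$. In the first case, using continuity of $f_\rho$ at $z_0$, choose arcs $I_n \ni z_0$ in $S_\rho$ of positive measure $\mu_n \coloneqq \frac{1}{2\pi}\int_{I_n}\frac{\mathrm{d}z}{\abs{z}}$ with $\abs{f_\rho(z) - \lambda_0} \le \frac1n$ on $I_n$, and set $g_n \coloneqq \mu_n^{-1/2}\indicator_{I_n}x_n$; then $\norm{g_n}_{L_2(S_\rho; H)} = 1$ while $\norm{Mg_n}_{L_2(S_\rho; H)} \le \frac1n + \norm{(\lambda_0 - A)x_n}_H \to 0$, so $M$ is not bounded below, hence not invertible. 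In the second case the adjoint $M^*$ is multiplication by $z \mapsto \overline{f_\rho(z)} - A^*$, and the same construction with $y_0$ in place of $x_n$ shows $\norm{M^* g_n} \to 0$ with $\norm{g_n} = 1$, so $M^*$, and therefore $M$, is not invertible. This passage from ``the symbol meets $\sigma(A)$ at a single point'' to ``$M$ is not invertible'' --- via the split into the approximate-point-spectrum and residual-spectrum cases together with the localisation of (approximate) eigenvectors near $z_0$ --- is the step I expect to be the main obstacle; everything else is bookkeeping with the $\cZ$-transform dictionary and compactness of $S_\rho$.

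For the final assertion, recall $\sigma(A) \subseteq \setm{\lambda \in \C}{\abs{\lambda} \le \norm{A}_{L(H)}}$. By Lemma~\ref{lem:binom-estimate}, for $\mu > 1$ and $z \in S_\mu$ we have $\abs{f(z)} = \abs{z}\,\abs{(1 - z^{-1})^\alpha} \ge \mu(1 - \mu^{-1})^\alpha = \mu^{1 - \alpha}(\mu - 1)^\alpha$, which tends to $\infty$ as $\mu \to \infty$ since $1 - \alpha > 0$. Hence there is $\rho > 1$ with $\mu^{1 - \alpha}(\mu - 1)^\alpha > \norm{A}_{L(H)}$ for all $\mu > \rho$; for such $\mu$ and any $z \in S_\mu$ we get $\abs{f(z)} > \abs{\lambda}$ for every $\lambda \in \sigma(A)$, so $f(z) \notin \sigma(A)$, i.e.\ $\ran f_\mu \cap \sigma(A) = \emptyset$. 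Equivalently, $\setm{z(1 - z^{-1})^\alpha}{\abs{z} > \rho} = \bigcup_{\mu > \rho}\ran f_\mu$ lies in the resolvent set of $A$.
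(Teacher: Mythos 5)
Your proof follows the same route as the paper: conjugate by the unitary $\cZ_\rho$ to reduce to the multiplication operator with symbol $z \mapsto f_\rho(z) - A$ on $L_2(S_\rho;H)$, and obtain the final assertion from Lemma~\ref{lem:binom-estimate} together with the boundedness of $\sigma(A)$. The only difference is that the paper simply asserts the equivalence ``$M$ invertible $\iff$ $\ran f_\rho \cap \sigma(A) = \emptyset$,'' whereas you prove it (compactness of $S_\rho$ for one direction, localised approximate eigenvectors and the adjoint for the other); your argument for that step is correct.
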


\begin{proof}
Recall the multiplication operator $\mulop$ of Lemma \ref{lem:mo}. 
Using the $\cZ$ transform, the operator $\tau (1 - \tau^{-1})^\alpha - A$ is invertible in $\ell_{2, \rho}(\Z; H)$ if and only if $\mulop (1 - \mulop^{-1})^\alpha - A$ is invertible in $L_2(S_\rho, H)$, since $\cZ_\rho$ is unitary.
This is the case, however, if and only if $\ran f_\rho \cap \sigma(A) = \emptyset$.
Using Lemma \ref{lem:binom-estimate} there is $\rho > 1$ such that for all
	$\mu > \rho$ and $z \in S_\mu$, $r(A) < \mu (1 - \mu^{-1})^\alpha \leq \abs{z (1 - z^{-1})^\alpha}$.
That is for all $\mu > \rho$, $\ran f_\mu \cap \sigma(A) = \emptyset$.
\end{proof}

\begin{proposition}[Causality of $(\tau (1 - \tau^{-1})^{\alpha} - A)^{-1}$]\label{p:causality-lin-frac}
Let $\rho > 1$, $\alpha \in (0, 1)$ and $A \in L(H)$.
Let $f_\rho$ be defined as in Lemma \ref{lem:invertible}.
The following are equivalent:
\begin{align*}
    &(i) &&(\tau (1 - \tau^{-1})^\alpha - A)^{-1} \in L\big(\ell_{2, \rho}(\Z; H)\big) \; \text{is causal},
    \\
    &(ii) &&(\tau (1 - \tau^{-1})^\alpha - A)^{-1} \in L\big(\ell_{2, \rho}(\Z; H)\big)
    \\& &&\text{and} \qquad \forall x \in H \colon \spt (\tau (1 - \tau^{-1})^{\alpha} - A)^{-1} \delta_{-1} x \subseteq \N,
    \\
    &(iii) &&\forall \mu \geq \rho \colon \ran f_\mu \cap \sigma(A) = \emptyset.
\end{align*}
\end{proposition}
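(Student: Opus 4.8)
The plan is to prove the cycle $(i) \Rightarrow (ii) \Rightarrow (iii) \Rightarrow (i)$, using Lemma \ref{lem:invertible} to guarantee invertibility once $(iii)$ (or part of $(ii)$) is known, and the Paley--Wiener characterization of positive support, Lemma \ref{lem:positive-support}, to connect causality with analyticity on $\C_{\abs{\cdot} > \rho}$.

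First, $(i) \Rightarrow (ii)$ is almost immediate: invertibility is part of hypothesis $(i)$, and since $\delta_{-1} x$ has support in $\Z_{\geq -1} \subseteq \Z_{\geq -1}$, causality of the inverse gives $\spt (\tau(1-\tau^{-1})^\alpha - A)^{-1}\delta_{-1} x \subseteq \Z_{\geq -1}$. To upgrade $\Z_{\geq -1}$ to $\N$ one checks the value at $-1$ directly: applying $(1-\tau^{-1})^{-\alpha}$ and using that $(1-\tau^{-1})^{-\alpha}\delta_{-1}x$ vanishes below $-1$, one sees as in the proof of Proposition \ref{p:equ-frac-de-seq-eq} that the solution $u$ satisfies $\tau u = (1-\tau^{-1})^{-\alpha}Au + (1-\tau^{-1})^{-\alpha}\delta_{-1}x$, whose $(-1)$-component reads $u_0 = (Au)_{-1} + x$; since $Au$ acts pointwise and $\spt u \subseteq \Z_{\geq -1}$ with $u_{-1}$ the only possibly-nonzero entry below $0$, a short argument (or invoking causality once more together with $\spt Au \subseteq \spt u$) forces $u_{-1} = 0$.

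Next, $(ii) \Rightarrow (iii)$: fix $\mu \geq \rho$. For $\mu = \rho$ invertibility of $\tau(1-\tau^{-1})^\alpha - A$ in $\ell_{2,\rho}$ is, via $\cZ_\rho$ and Lemma \ref{lem:mo}, equivalent to $\ran f_\rho \cap \sigma(A) = \emptyset$, exactly as in Lemma \ref{lem:invertible}. For $\mu > \rho$ one must propagate invertibility from $\ell_{2,\rho}$ to $\ell_{2,\mu}$; this is where the support hypothesis in $(ii)$ enters. The idea is: for each $x \in H$ the solution $u = (\tau(1-\tau^{-1})^\alpha - A)^{-1}\delta_{-1}x$ lies in $\ell_{2,\rho}(\Z;H)$ and has $\spt u \subseteq \N$, so by Lemma \ref{lem:positive-support} its $\cZ$ transform extends analytically to $\C_{\abs{\cdot} > \rho}$ with the Hardy-type bound; on that region it satisfies $(z(1-z^{-1})^\alpha - A)\widehat u(z) = z^{-(-1)} x = z x$ wait — more carefully, $\cZ$ of $\delta_{-1}x$ is $z^{1}x$, so $\widehat u(z) = (f(z) - A)^{-1} z x$ for $\abs{z} = \rho$, and analyticity plus the resolvent identity let one conclude $f(z) \notin \sigma(A)$ for all $\abs{z} > \rho$ as well, since otherwise $(f(z)-A)^{-1}$ would blow up at some $z_0$ with $\abs{z_0} > \rho$, contradicting analyticity of $\widehat u$ (choosing $x$ in the direction of the singular vector). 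This gives $\ran f_\mu \cap \sigma(A) = \emptyset$ for every $\mu \geq \rho$.

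Finally, $(iii) \Rightarrow (i)$: by Lemma \ref{lem:invertible} condition $(iii)$ at level $\mu = \rho$ gives invertibility of $B \coloneqq \tau(1-\tau^{-1})^\alpha - A$ in $\ell_{2,\rho}(\Z;H)$; and $(iii)$ for all $\mu \geq \rho$ gives, again by Lemma \ref{lem:invertible}, invertibility in every $\ell_{2,\mu}(\Z;H)$ with $\mu \geq \rho$. To get causality, fix $a \in \Z$ and $g \in \ell_{2,\rho}(\Z;H)$ with $\spt g \subseteq \Z_{\geq a}$; after shifting by $\tau^{-a}$ (which commutes with $B$ since $B$ is built from $\tau$ and the pointwise operator $A$) we may assume $a = 0$, so $g \in \ell_{2,\rho}(\Z;H)$ with $\spt g \subseteq \N$. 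Then $g \in \ell_{2,\mu}(\Z;H)$ for all $\mu \geq \rho$ as well, and $B^{-1}g$ is the same element of $\ell_{2,\mu}(\Z;H)$ for all such $\mu$ (uniqueness of solutions); hence $\sup_{\mu > \rho}\int_{S_\mu}\norm{\cZ_\mu(B^{-1}g)}_H^2\,\frac{\mathrm dz}{\abs z}$ is finite — here one uses that $\cZ_\mu(B^{-1}g)(z) = (f(z)-A)^{-1}\cZ_\mu(g)(z)$ and that $z \mapsto (f(z)-A)^{-1}$ is bounded on $\C_{\abs{\cdot} \geq \rho}$ (it is continuous and, by Lemma \ref{lem:binom-estimate}, tends to $0$ as $\abs{z} \to \infty$, so it is bounded), while the $L_2(S_\mu)$ norms of $\cZ_\mu g$ are uniformly bounded by Lemma \ref{lem:positive-support} applied to $g$. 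Lemma \ref{lem:positive-support} then yields $\spt B^{-1}g \subseteq \N$, i.e.\ causality. The main obstacle is the middle implication: making the analytic-continuation/resolvent argument that turns "all the particular solutions $B^{-1}\delta_{-1}x$ have positive support" into "$f(z)$ avoids $\sigma(A)$ on the whole exterior region," which requires care in choosing $x$ to detect a hypothetical spectral value and in handling the fact that $\sigma(A)$ is a compact set rather than a single point.
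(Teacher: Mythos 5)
Your proposal follows essentially the same route as the paper's proof: the cycle $(i)\Rightarrow(ii)\Rightarrow(iii)\Rightarrow(i)$, with Lemma \ref{lem:invertible} supplying invertibility, Lemma \ref{lem:positive-support} providing the Paley--Wiener link between positive support and analyticity plus the Hardy bound, Lemma \ref{lem:binom-estimate} giving the decay of $\norm{(z(1-z^{-1})^\alpha-A)^{-1}}_{L(H)}$ at infinity, and the resolvent-estimate/Banach--Steinhaus mechanism driving the contradiction in the middle implication.

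Two remarks. First, in $(i)\Rightarrow(ii)$ there is a concrete slip: evaluating $\tau u = (1-\tau^{-1})^{-\alpha}(Au+\delta_{-1}x)$ at index $-1$ gives $u_0 = Au_{-1}+x$, which relates $u_0$ and $u_{-1}$ but does not by itself force $u_{-1}=0$. You should evaluate at index $-2$: since $(1-\tau^{-1})^{-\alpha}$ is causal and $\spt(Au+\delta_{-1}x)\subseteq\Z_{\geq -1}$, one gets $u_{-1}=\bigl((1-\tau^{-1})^{-\alpha}(Au+\delta_{-1}x)\bigr)_{-2}=0$, which is exactly the paper's one-line computation. Second, the point you flag as ``requiring care'' in $(ii)\Rightarrow(iii)$ is resolved in the paper as follows: assuming some $\rho'>\rho$ with $\ran f_{\rho'}\cap\sigma(A)\neq\emptyset$, the set $\setm{z\in\C_{\abs{\cdot}\geq\rho'}}{f(z)\in\sigma(A)}$ is compact (closed by continuity of $f$, bounded by Lemma \ref{lem:invertible}), so one may pick $z'$ in it of maximal modulus and approach $z'$ by points $z_n$ with $\abs{z_n}>\abs{z'}$ at which $f(z_n)$ lies in the resolvent set; the resolvent estimate and Banach--Steinhaus then produce an $x$ with $\norm{(f(z_n)-A)^{-1}x}_H\to\infty$, contradicting continuity at $z'$ of the analytic extension of $\cZ$ of the positively supported solution. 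Your sketch names all these ingredients, so this is a matter of filling in the selection of $z'$ rather than a change of strategy.
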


\begin{proof}
$(i) \Rightarrow (ii)$:
Let $x \in H$ and $u \coloneqq (\tau (1 - \tau^{-1})^\alpha - A)^{-1} \delta_{-1} x$.
Using causality assumed in $(i)$, we obtain $\spt u \subseteq \Z_{\geq -1}$.
Moreover, $u_{-1} = ((1 - \tau^{-1})^{-\alpha} A u)_{-2} + ((1 - \tau^{-1})^{-\alpha} \delta_{-1} x)_{-2} = 0$ so that $\spt u \subseteq \N$.
\\[1ex]
$(ii) \Rightarrow (iii)$:
Suppose by contradiction that there is $\rho' > \rho$ with $\ran f_{\rho'} \cap \sigma(A) \neq \emptyset$.
The set $\setm{z \in \C_{\abs{\cdot} \geq \rho'}}{z(1 - z^{-1})^\alpha \in \sigma(A)}$ is closed, since $\sigma(A)$ is closed
	and since $f$ is continuous and the set is bounded, since by Lemma \ref{lem:invertible} there is a $\tilde \rho > \rho'$
	such that $f(\C_{\abs{\cdot} \geq \tilde \rho})$ is in the resolvent set.
Thus there is $z' \in \setm{z \in \C_{\abs{\cdot} \geq \rho'}}{z(1 - z^{-1})^\alpha \in \sigma(A)}$ with maximum absolute value.
Therefore there is a sequence $(z_n)_{n \in \N}$ in $\C$ with $\abs{z_n} > \abs{z'}$, that is $z_n (1 - z_n^{-1})^\alpha$ is in the resolvent set of $A$ ($n \in \N$)
	and $\lim_{n \rightarrow \infty} z_n = z'$.
Using the resolvent estimate (cf.\ \cite[p.~378]{werner2000}), we have $\lim_{n \rightarrow \infty} \norm{(z_n(1 - z_n^{-1})^\alpha - A)^{-1}}_{L(H)} = \infty$.
When applying the Banach-Steinhaus theorem (cf.\ \cite[p.~141]{werner2000}), there is $x \in H$ with $\lim_{n \rightarrow \infty} \norm{(z_n(1 - z_n^{-1})^\alpha - A)^{-1} x}_H = \infty$.
By assumption $(\tau (1 - \tau^{-1})^\alpha - A)^{-1} \delta_{-1} x \in \ell_{2, \rho}(\Z; H)$
	and $\spt (\tau (1 - \tau^{-1})^\alpha - A)^{-1} \delta_{-1} x \subseteq \N$.
Hence for $v \coloneqq (\tau (1 - \tau^{-1})^\alpha - A)^{-1} \delta_{0} x \in \ell_{2, \rho}(\Z; H)$
	we have $v \in \ell_{2, \rho}(\Z; H)$ and $\spt v \subseteq \N$.
Applying Lemma \ref{lem:positive-support}, it follows that $F: \C_{\abs{\cdot} > \rho} \rightarrow H, z \mapsto \sum_{k = -\infty}^\infty v_k z^{-k}$
	is analytic.
Since $v \in \ell_{2, \mu}(\Z; H)$ for $\mu > \abs{z'}$, it follows that for $G: \C_{\abs{\cdot} > \abs{z'}} \rightarrow H, z \mapsto (z(1 - z^{-1})^\alpha - A)^{-1} x$
	we have $G = F|_{\C_{\abs{\cdot} > \abs{z'}}}$.
This means that $\lim_{n \rightarrow \infty} \norm{F(z_n)}_H = \lim_{n \rightarrow \infty} \norm{G(z_n)}_H = \infty$.
Since $F$ is continuous, this is a contradiction in that $\lim_{n \rightarrow \infty} \norm{F(z_n)}_H\; \neq \infty$.
\\[1ex]
$(iii) \Rightarrow (i)$:
We have $(\tau (1 - \tau^{-1})^\alpha - A)^{-1} \in L(\ell_{2, \mu}(\Z; H))$ for $\mu > \rho$ by Lemma \ref{lem:invertible}.
Since the resolvent of $A$ is analytic, the mapping $z \mapsto (z (1 - z^{-1})^\alpha - A)^{-1}$ is analytic on $\C_{\abs{\cdot} > \rho}$.
Moreover the mapping $z \mapsto \norm{(z (1 - z^{-1})^\alpha - A)^{-1}}_{L(H)}$ is continuous and hence bounded on compact sets
	$\C_{\mu \geq \abs{\cdot} \geq \rho}$ where $\mu \geq \rho$, i.e. the mapping attains its maximum on $\C_{\mu \geq \abs{\cdot} \geq \rho}$.
By Lemma \ref{lem:binom-estimate} and since $A$ is bounded,
	$\sup_{z \in S_\mu} \norm{(z (1 - z^{-1})^\alpha - A)^{-1}}_{L(H)}$ decays to zero when $\mu$ tends to infinity.
It follows that $\mu \mapsto \sup_{z \in S_\mu} \norm{(z (1 - z^{-1})^\alpha - A)^{-1}}_{L(H)}$ is bounded on $[\rho, \infty)$ and therefore
	the conditions of Lemma \ref{lem:positive-support}$(ii)$ are satisfied for $(\tau (1 - \tau^{-1})^\alpha - A)^{-1} u$ where $u \in \ell_{2, \rho}(\Z; H)$, $\spt u \subseteq \N$.
It follows that $(\tau (1 - \tau^{-1})^\alpha - A)^{-1}$ is causal.
\end{proof}

\begin{remark}
Let $A \in L(H)$, $\rho > 1$ and $\alpha \in (0, 1)$.
By Lemma \ref{lem:invertible} and Proposition \ref{p:causality-lin-frac} we can always choose $\rho$ large enough such that $\tau (1 - \tau^{-1})^\alpha - A$ is invertible with causal inverse.
As a consequence the linear fractional difference equation \eqref{e:lin-RL} or \eqref{e:lin-C} has a unique solution $u \in \ell_{2, \rho}(\Z; H)$.
Moreover, from the previous Theorem it follows that \eqref{e:lin-RL} or \eqref{e:lin-C} has a unique solution in $\ell_{2, \mu}(\Z; H)$ for $\mu \geq \rho$
	which coincides with the solution $u$, since $\ell_{2, \rho}(\N; H) \subseteq \ell_{2, \mu}(\N; H)$.
Therefore we can speak of the solution operator $(\tau (1 - \tau^{-1})^\alpha - A)^{-1}$.

The difference equation for an initial value $x \in H$ and $A \in L(H)$
\begin{equation*}
    (\Delta^\alpha u)_n = A u_n, \qquad u_0 = x,
\end{equation*}
or
\begin{equation*}
    (\Delta_C^\alpha u)_n = A u_n, \qquad u_0 = x,
\end{equation*}
can be solved algebraically with a unique solution $u \in H^\N$ (cf.\ Proposition \ref{p:equ-frac-de-seq-eq}$(iii), (vi)$).
Recall the embedding $\iota$ of Proposition \ref{lem:one-sided}.
Since $A$ has bounded spectrum, when applying the previous theorem, there is $\rho > 1$ such that $\iota u \in \ell_{2, \rho}(\Z; H)$ is the unique solution of \eqref{e:lin-RL} or \eqref{e:lin-C}.
\end{remark}

\subsection*{Asymptotic stability}

We discuss asymptotic stability of linear fractional difference equations.
For an analysis of rates of convergence, see also \cite{czermak2015} and \cite{Tuan2018}.

\begin{definition}[Asymptotic stability]
Let $A \in L(H)$.
The zero equilibrium of equation \eqref{e:lin-RL} or \eqref{e:lin-C}, i.e., the solution $u = 0$ for the inital value $0$, is said to be asymptotically stable
    if for every $\rho > 1$, every solution $u \in \ell_{2, \rho}(\Z; H)$ of \eqref{e:lin-RL} or \eqref{e:lin-C} with $\spt u \subseteq \N$
    satisfies $\lim_{n \rightarrow \infty} u_n = 0$ in $H$.
\end{definition}

\begin{remark}\label{r:good-spaces}
If a sequence $u \in H^\Z$ satisfies $\spt u \subseteq \N$ and $\lim_{n \rightarrow \infty} u_n = 0$ then necessarily for all $\rho > 1$ we have $u \in \ell_{2, \rho}(\Z; H)$.
One could say that the spaces $\ell_{2, \rho}(\Z; H)$, $\rho > 1$, are large enough to look for asymptotically stable solutions of a linear sequence equation.
\end{remark}

\begin{proposition}[Necessary condition for asymptotic stability]\label{p:necessary}
Let $A \in L(H)$ such that the zero equilibrium of equation \eqref{e:lin-RL} or \eqref{e:lin-C} is asymptotically stable and let $f_\mu$ ($\mu > 1$) be as in Lemma \ref{lem:invertible}.
Then for all $\mu > 1$, $\tau(1 - \tau^{-1})^\alpha - A$ is invertible in $\ell_{2, \mu}(\Z; H)$ with causal inverse, i.e., for each $\mu > 1$, $\sigma(A) \cap \ran f_\mu = \emptyset$.
\end{proposition}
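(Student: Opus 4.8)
The plan is to argue by contradiction: assuming $\ran f_{\mu_0}\cap\sigma(A)\neq\emptyset$ for some $\mu_0>1$, I will exhibit a solution of \eqref{e:lin-RL} (and, mutatis mutandis, of \eqref{e:lin-C}) that is supported in $\N$, lies in some space $\ell_{2,\mu}(\Z;H)$ with $\mu>1$, yet does not tend to $0$, contradicting asymptotic stability. The construction mirrors the implication $(ii)\Rightarrow(iii)$ in the proof of Proposition \ref{p:causality-lin-frac}; there, causality secures that the candidate solution is supported in $\N$, whereas here asymptotic stability additionally forces it to lie in \emph{every} $\ell_{2,\rho}(\Z;H)$, $\rho>1$ (Remark \ref{r:good-spaces}), which is exactly what drives the concluding analyticity argument.

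First I would locate the relevant spectral point. Exactly as in Proposition \ref{p:causality-lin-frac}, the set $\setm{z\in\C_{\abs{\cdot}\geq\mu_0}}{z(1-z^{-1})^\alpha\in\sigma(A)}$ is closed (since $\sigma(A)$ is closed and $f$ continuous) and bounded (Lemma \ref{lem:invertible} furnishes a $\tilde\rho$ beyond which $f$ takes values in the resolvent set of $A$), hence compact and, by assumption, nonempty; pick in it an element $z'$ of maximal modulus, so that $\abs{z'}\geq\mu_0>1$ and $f(z)$ belongs to the resolvent set of $A$ whenever $\abs{z}>\abs{z'}$. Set $\lambda'\coloneqq f(z')\in\sigma(A)$; since $\abs{z'}>1$ we have $\lambda'\neq 0$ and $z'\neq 1$. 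Choosing $z_n\to z'$ with $\abs{z_n}>\abs{z'}$, the resolvent estimate gives $\norm{(f(z_n)-A)^{-1}}_{L(H)}\to\infty$, and the Banach--Steinhaus theorem yields, after passing to a subsequence, a vector $x\in H$ with $\norm{(f(z_n)-A)^{-1}x}_H\to\infty$.

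Next I would build and analyse the solution. Fix $\mu>\abs{z'}$. By Lemma \ref{lem:invertible} the operator $\tau(1-\tau^{-1})^\alpha-A$ is invertible in $\ell_{2,\mu}(\Z;H)$, and since $\ran f_\nu\cap\sigma(A)=\emptyset$ for all $\nu\geq\mu$ (maximality of $\abs{z'}$), Proposition \ref{p:causality-lin-frac} makes its inverse causal there; hence $u\coloneqq(\tau(1-\tau^{-1})^\alpha-A)^{-1}\delta_{-1}x\in\ell_{2,\mu}(\Z;H)$ has $\spt u\subseteq\Z_{\geq-1}$, and the identity $u_{-1}=((1-\tau^{-1})^{-\alpha}Au)_{-2}+((1-\tau^{-1})^{-\alpha}\delta_{-1}x)_{-2}=0$ from Proposition \ref{p:causality-lin-frac} upgrades this to $\spt u\subseteq\N$. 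As $u$ solves \eqref{e:lin-RL} in $\ell_{2,\mu}(\Z;H)$ with $\spt u\subseteq\N$ and $\mu>1$, asymptotic stability gives $u_n\to0$, whence $u\in\ell_{2,\rho}(\Z;H)$ for every $\rho>1$ by Remark \ref{r:good-spaces}, and therefore also $u\in\ell_{1,\nu}(\Z;H)$ for every $\nu>1$ by Proposition \ref{lem:one-sided}(b). Applying $\cZ_\nu$ to $(\tau(1-\tau^{-1})^\alpha-A)u=\delta_{-1}x$ and using the $\cZ$ transform together with Lemma \ref{lem:mo} (note $\cZ_\nu(\delta_{-1}x)(z)=zx$) gives, for $\nu>\abs{z'}$, $(\cZ_\nu u)(z)=z(f(z)-A)^{-1}x$ on $S_\nu$; on the other hand $\cZ_\nu u$ is the restriction to $S_\nu$ of $F\colon\C_{\abs{\cdot}>1}\to H,\ z\mapsto\sum_{k\in\Z}u_kz^{-k}$, which is analytic by Lemma \ref{lem:positive-support} (as $\spt u\subseteq\N$). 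Comparing these on all circles $S_\nu$, $\nu>\abs{z'}$, yields $F(z)=z(f(z)-A)^{-1}x$ for all $\abs{z}>\abs{z'}$, so that $\norm{F(z_n)}_H=\abs{z_n}\norm{(f(z_n)-A)^{-1}x}_H\to\infty$, contradicting the continuity of $F$ at $z'$. For \eqref{e:lin-C} the only change is that $\delta_{-1}x$ is replaced by $(1-\tau^{-1})^\alpha\chi_{\Z_{\geq-1}}x$, whose $\cZ$ transform equals $f(z)(1-z^{-1})^{-1}x$, so that $(\cZ_\nu u)(z)=f(z)(1-z^{-1})^{-1}(f(z)-A)^{-1}x$ and the prefactor $f(z_n)(1-z_n^{-1})^{-1}\to\lambda'(1-z'^{-1})^{-1}\neq0$ again forces $\norm{F(z_n)}_H\to\infty$. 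Having thus shown $\ran f_\mu\cap\sigma(A)=\emptyset$ for all $\mu>1$, the invertibility of $\tau(1-\tau^{-1})^\alpha-A$ in every $\ell_{2,\mu}(\Z;H)$ follows from Lemma \ref{lem:invertible} and the causality of the inverse from Proposition \ref{p:causality-lin-frac}.

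The step I expect to be most delicate is not a single deep estimate but the bookkeeping that turns the $L_2(S_\nu;H)$-valued identity $(\cZ_\nu u)(z)=z(f(z)-A)^{-1}x$ into a genuine pointwise, analytic identity on the annulus $\C_{\abs{\cdot}>\abs{z'}}$: this fuses the Paley--Wiener characterisation of positive support (Lemma \ref{lem:positive-support}), the $\ell_2$-into-$\ell_1$ embedding of one-sided weighted spaces (Proposition \ref{lem:one-sided}), analyticity of the resolvent of $A$, and continuity/analytic continuation. Since precisely this combination already appears in the proof of Proposition \ref{p:causality-lin-frac}, the essential task is to verify that it transfers with the causality hypothesis there replaced by the conclusion of Remark \ref{r:good-spaces}, i.e.\ by membership of the asymptotically stable solution in all $\ell_{2,\rho}(\Z;H)$, $\rho>1$.
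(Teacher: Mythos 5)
Your proposal is correct and follows essentially the same route as the paper's proof: contradiction via a spectral point $z'$ of maximal modulus on $\ran f_{\mu_0}\cap\sigma(A)$, the resolvent estimate plus Banach--Steinhaus to produce $x$ with $\norm{(f(z_n)-A)^{-1}x}_H\to\infty$, causality to get positive support of the solution, Remark \ref{r:good-spaces} to place it in every $\ell_{2,\rho}(\Z;H)$, and the Paley--Wiener lemma to derive a contradiction with analyticity of the $\cZ$ transform at $z'$. The only differences are cosmetic (you use $\delta_{-1}x$ where the paper uses $\delta_0x$, and you spell out the Caputo case and the $\ell_2$-into-$\ell_1$ step that the paper leaves implicit).
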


\begin{proof}
Assume by contradiction there is $z' \in \ran f_\rho \cap \sigma(A) \neq \emptyset$ where $\rho > 1$.
We may assume that $\ran f_\mu \cap \sigma(A) = \emptyset$ for $\mu > \abs{z'}$.
Then there is a sequence $(z_n)_{n \in \N}$ with $\abs{z_n} > \abs{z'}$ such that $z_n (1 - z_n^{-1})^\alpha$ is in the resolvent set of $A$ ($n \in \N$) and such that $z_n \rightarrow z'$ ($n \rightarrow \infty$).
Using the resolvent estimate we have $\lim_{n \rightarrow \infty} \norm{(z_n (1 - z_n^{-1})^\alpha - A)^{-1}}_{L(H)} = \infty$.
Using the Banach-Steinhaus theorem there is $x \in H$ with $\lim_{n \rightarrow \infty} \norm{(z_n (1 - z_n^{-1})^\alpha - A)^{-1} x}_H = \infty$.
By Lemma \ref{lem:invertible} and Proposition \ref{p:causality-lin-frac}, for $\mu > \abs{z'}$ we know that $\tau (1 - \tau^{-1})^\alpha - A$ is invertible in $\ell_{2, \mu}(\Z; H)$
	and $v \coloneqq (\tau (1 - \tau^{-1})^\alpha - A)^{-1} \delta_0 x$ satisfies $\spt v \subseteq \N$.
Since the zero equilibrium is asymptotically stable, we have $v \in \ell_{2, \rho'}(\Z; H)$ for some $\rho' \in (1, \abs{z'})$ by Remark \ref{r:good-spaces}.
Then the mapping $F: \C_{\abs{\cdot} > \rho'} \rightarrow H, z \mapsto \sum_{k = -\infty}^\infty v_k z^{-k}$ is analytic and
	equals $G: \C_{\abs{\cdot} > \abs{z'}} \rightarrow H, z \mapsto (z(1 - z^{-1})^\alpha - A)^{-1} \delta_0 x$ on $\C_{\abs{\cdot} > \abs{z'}}$
	by Lemma \ref{lem:positive-support}.
Therefore we have $\lim_{n \rightarrow \infty} F(z_n) < \infty$, since $F$ is analytic which contradicts $\lim_{n \rightarrow \infty} F(z_n) = \lim_{n \rightarrow \infty} G(z_n) = \infty$. 
\end{proof}

For a sufficient condition of asymptotic stability we observe that if $u \in \ell_{2, 1}(\Z; H)$ with $\spt u \subseteq \N$ then $\lim_{n \rightarrow \infty} u_n = 0$.

\begin{proposition}[Sufficient condition for asymptotic stability]\label{p:sufficient}
Let $A \in L(H)$.
For all $\rho > 1$ let $\tau(1 - \tau^{-1})^\alpha - A$ be invertible in $\ell_{2, \rho}(\Z; H)$ with causal inverse.
If for all $x \in H$ the mapping $\C_{\abs{\cdot} > 1} \rightarrow H, z \mapsto \sum_{k = -\infty}^\infty [(\tau(1 - \tau^{-1})^\alpha - A)^{-1} \delta_{-1} x]_k z^{-k}$
	has a continuous continuation to the unit circle $S_1$ then the zero equilibrium of equation \eqref{e:lin-RL} or \eqref{e:lin-C} is asymptotically stable.
\end{proposition}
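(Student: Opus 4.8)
We identify the solution and reduce to a limit. Fix $\rho>1$ and a solution $u\in\ell_{2,\rho}(\Z;H)$ with $\spt u\subseteq\N$ of \eqref{e:lin-RL} (resp.\ \eqref{e:lin-C}); by Proposition \ref{p:equ-frac-de-seq-eq} the initial value attached to the equation is $x\coloneqq u_0$. Since by the standing hypothesis $\tau(1-\tau^{-1})^\alpha-A$ is boundedly invertible in $\ell_{2,\mu}(\Z;H)$ for every $\mu>1$, we have $u=(\tau(1-\tau^{-1})^\alpha-A)^{-1}\delta_{-1}x$ in the Riemann--Liouville case and $u=(\tau(1-\tau^{-1})^\alpha-A)^{-1}(1-\tau^{-1})^\alpha\chi_{\Z_{\geq-1}}x$ in the Caputo case; applying the same formula in $\ell_{2,\mu}(\Z;H)$ for arbitrary $\mu>1$ produces the same positive-support sequence, so $u\in\ell_{2,\mu}(\Z;H)$ for every $\mu>1$. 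It remains to prove $\lim_{n\to\infty}u_n=0$, and the plan is to transport $u$ down to the critical weight $\mu=1$ via the $\cZ$ transform, in the spirit of the Paley--Wiener characterisation (Lemma \ref{lem:positive-support}).

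Put $F_x(z)\coloneqq(z(1-z^{-1})^\alpha-A)^{-1}zx$; this is exactly the map assumed to have a continuous extension to $S_1$. By the standing hypothesis and Lemma \ref{lem:invertible}, $z(1-z^{-1})^\alpha-A$ is invertible for all $\abs{z}>1$, so $F_x$ is holomorphic on $\C_{\abs{\cdot}>1}$; together with $F_x(z)\to x$ as $\abs{z}\to\infty$ and the assumed continuity on $S_1$, $F_x$ extends continuously to the compact set $\C_{\abs{\cdot}\geq1}\cup\{\infty\}$, so $M\coloneqq\sup_{\abs{z}\geq1}\norm{F_x(z)}_H<\infty$. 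Using Lemma \ref{lem:mo} and the functional calculus for $(1-\tau^{-1})^\alpha$ together with $\cZ_\mu(\delta_{-1}x)(z)=zx$ and $\cZ_\mu(\chi_{\Z_{\geq-1}}x)(z)=\tfrac{z}{1-z^{-1}}x$ (and that scalar factors commute with $A$), one computes for $\mu>1$ that $\cZ_\mu u$ is the restriction to $S_\mu$ of $F_x$ in the Riemann--Liouville case and of $(1-z^{-1})^{\alpha-1}F_x$ in the Caputo case.

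In the Riemann--Liouville case, unitarity of $\cZ_\mu$ (Theorem \ref{t:ztransform}) gives $\sum_k\norm{u_k}_H^2\mu^{-2k}=\norm{\cZ_\mu u}_{L_2(S_\mu;H)}^2=\tfrac{1}{2\pi}\int_{S_\mu}\norm{F_x(z)}_H^2\tfrac{\mathrm{d}z}{\abs z}\leq M^2$ for every $\mu>1$; letting $\mu\downarrow1$ and using monotone convergence ($\mu^{-2k}\uparrow1$ for $k\geq0$ and $u_k=0$ for $k<0$) yields $\sum_{k\geq0}\norm{u_k}_H^2\leq M^2<\infty$, i.e.\ $u\in\ell_{2,1}(\Z;H)$, whence $u_n\to0$ (as noted just before the statement). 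In the Caputo case the same identity gives $\int_{S_\mu}\norm{\cZ_\mu u(z)}_H\tfrac{\mathrm{d}z}{\abs z}=\int_{S_\mu}\abs{1-z^{-1}}^{\alpha-1}\norm{F_x(z)}_H\tfrac{\mathrm{d}z}{\abs z}\leq M\sup_{\nu\geq1}\int_{S_\nu}\abs{1-z^{-1}}^{\alpha-1}\tfrac{\mathrm{d}z}{\abs z}<\infty$, where finiteness of the supremum uses $\alpha-1>-1$, so that the singularity of $\abs{1-z^{-1}}^{\alpha-1}$ at $z=1$ is integrable uniformly for $\nu\geq1$; thus the holomorphic function $(1-z^{-1})^{\alpha-1}F_x$ on $\C_{\abs{\cdot}>1}$ lies in the $H$-valued Hardy space $H^1$. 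Since a Hilbert space enjoys the Radon--Nikodym property, this function has radial boundary values forming an $L_1(S_1;H)$ function whose nonnegative Fourier coefficients are precisely $(u_n)_{n\geq0}$, and the vector-valued Riemann--Lebesgue lemma gives $u_n\to0$. (If $\alpha>\tfrac12$ then $\abs{1-z^{-1}}^{\alpha-1}$ is even square-integrable, so the Parseval argument again yields the stronger $u\in\ell_{2,1}(\Z;H)$.)

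I expect the Caputo case to be the real obstacle: unlike $F_x$, the transform $(1-z^{-1})^{\alpha-1}F_x$ is unbounded at $z=1$ when $\alpha$ is small, so $u$ need not lie in $\ell_{2,1}(\Z;H)$ and one must descend only to the level of $L^1$ boundary data; the quantitative crux is the uniform-in-$\nu\geq1$ integrability of $\abs{1-z^{-1}}^{\alpha-1}$ over $S_\nu$ and the boundedness $M<\infty$ drawn from the continuity hypothesis. A more hands-on route avoiding Hardy-space theory decomposes $F_x=F_x(1)+(F_x-F_x(1))$: the first summand yields the explicit sequence $((-1)^k\binom{\alpha-1}{k}F_x(1))_{k\geq0}$, which tends to $0$ by the Gamma-function asymptotics of Lemma \ref{l:eq-binom-gamma} (here $\alpha>0$ is used), and the Neumann series for $(z(1-z^{-1})^\alpha-A)^{-1}$ about $z=1$ (valid because the continuity hypothesis forces $0\in\varrho(A)$, since $z(1-z^{-1})^\alpha\to0$ as $z\to1$) shows $F_x(z)-F_x(1)=O(\abs{1-z}^\alpha)$, so that $(1-z^{-1})^{\alpha-1}(F_x(z)-F_x(1))=O(\abs{1-z}^{2\alpha-1})$ with coefficients decaying like $n^{-2\alpha}\to0$; alternatively, that same Neumann-series bound shows the Riemann--Liouville solution already lies in $\ell_1(\Z_{\geq0};H)$, whereupon the Caputo solution $(1-\tau^{-1})^{\alpha-1}u^{\mathrm{RL}}$ is the convolution of an $\ell_1$ sequence with a sequence in $c_0$, hence tends to $0$.
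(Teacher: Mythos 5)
Your argument is correct in its main line, and for the Riemann--Liouville equation it is essentially the paper's proof made precise: the paper simply takes the continuous continuation $g$, observes $g|_{S_1}\in L_2(S_1;H)$, sets $v=\cZ_1^{-1}g|_{S_1}\in\ell_{2,1}(\Z;H)$ and asserts $u=v$; your uniform Parseval bound $\sum_k\norm{u_k}_H^2\mu^{-2k}\leq M^2$ together with monotone convergence as $\mu\downarrow 1$ is exactly the missing justification of that assertion. Where you genuinely diverge is the Caputo case. The paper's proof does not treat it separately, even though the relevant transform is $(1-z^{-1})^{\alpha-1}F_x$, which blows up at $z=1$ and so does \emph{not} inherit a continuous (or even $L_2$, for $\alpha\leq\tfrac12$) continuation from the hypothesis on $F_x$; you correctly identify this as the real obstacle and close it with a vector-valued $H^1$ argument (uniform $L^1$ bounds on $S_\nu$ from the integrability of $\abs{1-z^{-1}}^{\alpha-1}$, boundary values via the Radon--Nikodym property of $H$, then the Riemann--Lebesgue lemma). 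That buys a complete proof of the stated proposition at the price of Hardy-space machinery well beyond the paper's toolkit; the paper's version buys brevity but, read literally, only establishes the Riemann--Liouville half. Two caveats on your closing ``alternative'' sketches: the inference from $F_x(z)-F_x(1)=O(\abs{1-z}^\alpha)$ to coefficient decay of order $n^{-2\alpha}$ is valid for the model function $(1-w)^{2\alpha-1}$ but not for an arbitrary holomorphic function merely satisfying that pointwise bound, and the claim that the Riemann--Liouville solution lies in $\ell_1(\Z_{\geq 0};H)$ is asserted without proof; neither is needed, since your $H^1$ argument already suffices, but as written those alternatives are not complete proofs.
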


\begin{proof}
Let $g$ be the continuous continuation.
Then $g|_{S_1} \in L_2(S_1, H)$ and $v \coloneqq \cZ_1^{-1} g|_{S_1} \in \ell_{2, 1}(\Z; H)$.
Moreover, $u = v$ that is $u \in \ell_{2, 1}(\Z; H)$.
\end{proof}

\begin{remark}
We believe that the necessary conditions for stability in Proposition \ref{p:necessary} are not sufficient, neither are the sufficient conditions for stability in Proposition \ref{p:sufficient} necessary. Already for semigroups the asymptotic stability can in general not be characterized by spectral conditions solely. The shift operator on continuous functions from $\R^+$ to $\R$ which decay at infinity, for example, is asymptotically stable although its spectrum consists of all complex numbers with non-positive real part \cite[Example 2.5(c)]{Arendt:Batty1988}. The characterization of asymptotic stability for linear fractional difference equations is an intricate problem which still needs to be addressed.
\end{remark}

\begin{example}
Let $H = \C$, $A: \C \rightarrow \C, z \mapsto \lambda z$ where $\lambda \in \R$ and $\alpha \in (0, 1)$.
We study the asymptotic behavior of the linear fractional equations \eqref{e:lin-RL} and \eqref{e:lin-C} on $\ell_{2, \rho}(\Z; H)$ ($\rho > 1$) in view of Proposition \ref{p:necessary} and Proposition \ref{p:sufficient}
    and therefore want to apply the $\cZ$ transform to equation \eqref{e:lin-RL} and \eqref{e:lin-C}.
In order to obtain an asymptotically stable zero equilibrium by Proposition \ref{p:necessary}, we must have $\sigma(A) \cap \ran f = \emptyset$ where $f: \C_{\abs{\cdot} > 1} \rightarrow \C, z \mapsto z (1 - z^{-1})^\alpha$
    is defined as in Lemma \ref{lem:invertible} and $\sigma(A) = \{\lambda\}$.
We remark that for $z \in \C_{\abs{\cdot} > 1}$, $f(z) \in \R$ if and only if $z \in \R$ since $f$ is injective and since $f(\overline z) = \overline{f(z)}$.
Moreover $f(\C_{\abs{\cdot} > 1} \cap \R) = (-\infty, -2^\alpha) \cup (0, \infty)$ and so $\lambda \notin \ran f$ if and only if $\lambda \in [-2^\alpha, 0]$.
By Proposition \ref{p:necessary} we necessarily have $\lambda \in [-2^\alpha, 0]$ if the zero equilibrium of \eqref{e:lin-RL} or \eqref{e:lin-C} is asymptotically stable.
Let $\lambda \in [-2^\alpha, 0]$ and for $u \in \ell_{2, \rho}(\Z; H)$ we denote $\hat{u} \coloneqq \cZ u$.

We consider \eqref{e:lin-RL} with $x \in \C$ first.
Also for $z \in S_\rho$ we have $(\cZ \delta_{-1} x)(z) = z x$.
Applying the $\cZ$ transform to equation \eqref{e:lin-RL}, we obtain for $z \in S_\rho$
\begin{equation*}
    z (1 - z^{-1})^\alpha \hat{u}(z) = A \hat{u}(z) + z x.
\end{equation*}
If $\lambda \in (-2^\alpha, 0)$ the mapping $\C_{\abs{\cdot} > 1} \rightarrow H, z \mapsto \frac{z x}{z (1 - z^{-1})^\alpha - \lambda}$ has a continuous continuation to $S_1$
    and by Proposition \ref{p:sufficient} we obtain that the zero equilibrium of \eqref{e:lin-RL} is asymptotically stable.

We now consider equation \eqref{e:lin-C} where $x \in \C$.
For $z \in S_\rho$ we have $(\cZ \chi_{\Z_{\geq 1}} x)(z) = \frac{z x}{1 - z^{-1}}$.
Applying the $\cZ$ transform to equation \eqref{e:lin-C}, we obtain for $z \in S_\rho$
\begin{equation*}
    z (1 - z^{-1})^\alpha \hat{u}(z) = A \hat{u}(z) + z (1 - z^{-1})^{\alpha - 1} x.
\end{equation*}
If $\lambda \in (-2^\alpha, 0)$ the mapping $\C_{\abs{\cdot} > 1} \rightarrow H, z \mapsto \frac{z (1 - z^{-1})^{\alpha - 1} x}{z (1 - z^{-1})^\alpha - \lambda}$ has a continuous continuation to $S_1$
    and using Proposition \ref{p:sufficient} we obtain that the zero equilibrium of \eqref{e:lin-C} is asymptotically stable.
    
The cases $\lambda = 0$ and $\lambda = -2^\alpha$ are discussed in \cite{czermak2015}.
\end{example}

\section*{Acknowledgement}

The research of A.B.\ and A.C.\ was funded by the National Science Centre in Poland granted according to decisions DEC-2015/19/D/ST7/03679 and DEC-2017/25/B/ST7/02888, respectively.
The research of M.N.\ was supported by the Polish National Agency for Academic Exchange according to the decision PPN/BEK/2018/1/00312/DEC/1.
The research of S.S.\ was partially supported by an Alexander von Humboldt Polish Honorary Research Fellowship. 
The work of H.T.\ Tuan was supported by the joint research project from RAS and VAST QTRU03.02/18-19.  

\bibliography{fracdifference}{}
\bibliographystyle{abbrv}

\end{document}